\documentclass[12pt]{article}

\usepackage[margin=1in]{geometry}         

\geometry{letterpaper}

\usepackage{longtable}
\usepackage{graphicx}
\usepackage{caption,subcaption}
\usepackage{stmaryrd}
\usepackage{framed}
\usepackage{amssymb}
\usepackage{epstopdf}
\usepackage{amsmath,amsfonts,amssymb}
\usepackage{enumerate}
\usepackage{multirow}
\usepackage[color,notcite,notref]{showkeys}
\usepackage{fixltx2e}
\usepackage{bbm}
\usepackage{url}
\usepackage{cite}
\usepackage{fullpage}
\usepackage{fancyhdr}
\usepackage{color}
\usepackage{float}
\usepackage{soul}
\usepackage{graphicx}
\usepackage[doc]{optional}
\usepackage{enumitem}
\usepackage{xcolor}
\usepackage[toc,page]{appendix}
\usepackage{theorem}
\usepackage{array}
\usepackage{booktabs}
\usepackage{epsfig}
\usepackage{latexsym}


\parindent  4mm
\parskip    4pt 
\tolerance  3000

\definecolor{labelkey}{rgb}{1,1,1}
\definecolor{refkey}{rgb}{1,1,1}


\setlength\extrarowheight{5pt}

\DeclareGraphicsRule{.tif}{png}{.png}{`convert #1 `dirname #1`/`basename #1 .tif`.png}

\definecolor{myblue}{rgb}{.9, .9, 1}


\newtheorem{theorem}{Theorem}[section]
\newtheorem{lemma}[theorem]{Lemma}
\newtheorem{corollary}[theorem]{Corollary}
\newtheorem{proposition}[theorem]{Proposition}
\newtheorem{definition}[theorem]{Definition}

\theoremstyle{plain}{\theorembodyfont{\rmfamily}

\theoremstyle{plain}{\theorembodyfont{\rmfamily}
}
\theoremstyle{plain}{\theorembodyfont{\rmfamily}
}
\theoremstyle{plain}{\theorembodyfont{\rmfamily}
}
\theoremstyle{plain}{\theorembodyfont{\rmfamily}
\newtheorem{example}[theorem]{Example}}
\newtheorem{fact}[theorem]{Fact}
\theoremstyle{plain}{\theorembodyfont{\rmfamily}
\newtheorem{remark}[theorem]{Remark}}

\theoremstyle{plain}{\theorembodyfont{\rmfamily}

\newenvironment{Calg}[1]
  {\customC}
  {\endcustomC}

\theoremstyle{plain}{\theorembodyfont{\rmfamily}

\theoremstyle{plain}{\theorembodyfont{\rmfamily}

\newenvironment{linesr}[1]
  {\customL}
  {\endcustomL}

\def\proof{\noindent{\it Proof}. \ignorespaces}
\def\endproof{\ensuremath{\hfill \quad \blacksquare}}

\newcommand{\scal}[2]{\left\langle{#1},{#2}  \right\rangle}

\newcommand{\dom}{\ensuremath{\operatorname{dom}}}

\def\RR{{\mathbb{R}}}

\def\NN{{\mathbb{N}}}

\def\ox{\overline{x}}

\def\dd{\delta}

\newcommand{\la}{\langle}
\newcommand{\ra}{\rangle}

\newcommand{\disp}{\displaystyle}

\newcommand{\nexto}{\kern -0.54em}

\newcommand{\dZ}{{\cal Z \kern -0.7em Z}}
\newcommand{\dC}{{\rm\hbox{C \kern-0.8em\raise0.2ex\hbox{\vrule height5.4pt width0.7pt}}}}
\newcommand{\dQ}{{\rm\hbox{Q \kern-0.85em\raise0.25ex\hbox{\vrule height5.4pt width0.7pt}}}}

\newenvironment{retraitsimple}{\begin{list}{--~}{
 \topsep=0.3ex \itemsep=0.3ex \labelsep=0em \parsep=0em
 \listparindent=1em \itemindent=0em
 \settowidth{\labelwidth}{--~} \leftmargin=\labelwidth
}}{\end{list}}

\begin{document}
\title{{A projection algorithm for non-monotone variational inequalities}}
\author{Regina S. Burachik \footnote{School of Information Technology and Mathematical Sciences, University of South Australia, Mawson Lakes, SA 5095,
Australia.
 E-mail: \texttt{regina.burachik@unisa.edu.au}} \and R. D\'iaz Mill\'an\footnote{Federal Institute of Goi\'as, Rua 75, No.46, Centro. CEP: 74055-110, Goi\^ania, Brazil.
              E-mail: \texttt{rdiazmillan@gmail.com}}}

\maketitle

\vskip 8mm

\begin{abstract}
  We introduce  a projection-type algorithm for solving the variational inequality pro\-blem for point-to-set operators, and study its convergence properties. No monotoni\-city assumption is used in our analysis. The operator defining the
  problem is only assumed to be continuous in the point-to-set sense,
  i.e., inner- and outer-semicontinuous. Additionally, we assume non-emptiness of the so-called dual solution set. We prove that the
  whole sequence of iterates converges to a solution of the variational
  inequality. Moreover, we provide numerical experiments illustrating
  the behaviour of our iterates. Through several examples, we provide a comparison with a recent similar
  algorithm.

\bigskip

\noindent{\bf Keywords:} Variational inequality, projection
algorithms, outer-semicontinuous operator, inner-semicontinuous
operator.
\medskip

\noindent{\bf Mathematical Subject Classification (2010):} 90C33;
49J40; 47J20; 65K15.
\end{abstract}

\section{Introduction}
Variational inequalities were
introduced in 1966 by Hartman
and Stampacchia (see \cite{hart-stamp}), and have numerous important
applications in physics, engineering, economics, and optimization
theory (see, e.g., \cite{hart-stamp, stamp-kinder,191,chaos} and the
references therein).
The variational inequality problem for a point-to-set operator $T:\dom(T)\subseteq \RR^n\ \rightrightarrows \RR^n$ and a nonempty closed and convex
set $C \subset \dom(T)$, is stated as
  \begin{equation}\label{prob}
 \mbox{Find} \ \ x_*\in C  \ \  \mbox{such that } \exists u_*\in T(x_*), \ \mbox{with } \ \la u_*, x-x_*\ra\geq 0,  \ \  \ \forall x\in C.
   \end{equation}
By $S_*$ we denote the solution set of Problem
\eqref{prob}. This problem may be studied via its so-called {\em dual
  formulation}, which is stated as
 \begin{equation}\label{dual}
  \mbox{Find} \ \ x_*\in C  \ \  \mbox{such that } \forall x\in C, \mbox{and } \forall u\in T(x), \ \  \la u,x-x_*\ra\ge 0.
 \end{equation}
We denote the solution set of Problem (\ref{dual}) by $S_0$.  It is
easy to see that $S_0$ is closed and convex. However, in
general, $S_*$ is not. Most of the convergence analysis available for
variational inequalities relies on some kind of monotonicity
assumption. Namely, in the case of $T$ being a point-to-set, i.e., $T(x)$ is a subset of $\RR^n$, a standard assumption for analyzing Problem \eqref{prob} is either: maximal monotonicity (see e.g., \cite{bena,re-yu}), pseudo-monotonicity (see e.g., \cite{ceng,neto}) or  quasi-monotonicity \cite{nils}. In the point-to-point case, continuity of $T$, as well as $S_*\not=\emptyset$ are standard assumptions for
analyzing \eqref{prob}, (see e.g. \cite{liu,saew,mai,yu-re-hu}). In view of its wide range of applications, it is imperative to consider general versions of \eqref{prob}, which relax the standard assumptions mentioned above.

For solving variational inequalities, projection-type methods (see, e.g., \cite{rei-yun,pang,ldmu,ye}) are very popular because the iteration can be performed cheaply, when the set $C$ has a simple structure (e.g., a ball or a polyhedral set). The other methods of choice for variational inequalities may be cast as proximal-like or interior point methods (see e.g.,\cite{bur-joy, bur-sus, xuli,fuqua, bu-ju, nils}). The prox-like methods, however, may result in iterations which are as complex as solving the original problem and usually involve some kind of monotonicity assumption on $T$. Even though interior point methods may be more practical than prox-like methods in some instances, they may have, as stated in \cite{yacek} a ``lack of an efficient warmstarting scheme which would enable the use of information from a previous solution of a similar problem".

%
%

In the present paper, we devise a projection-type method for point-to-set variational inequalities and establish convergence
to a solution of Problem \eqref{prob}
under three basic assumptions: (i) non emptiness of the set $S_0$, (ii) local boundedness of the operator over the feasible set $C$, and
(iii) a suitable concept of continuity for point-to-set
operators. The concept of continuity we use in
(iii) may be found in for example \cite{bur-iu}, and formally stated in
Definition \ref{def:cont}.

If $T$ is point-to-point and maximally monotone, then it will automatically satisfy
assumption (i) through (iii) whenever $S_*\not=\emptyset$. Thus, in these cases whenever the problem presents a solution, our analysis is valid. When the operator is point-to-set and maximal monotone, with $ int \ co (\dom(T))\neq \emptyset$, the assumption (ii) is satisfied by Rockafellar's Theorem, see \cite{rocka}.

In Proposition \ref{soin} we prove that, when $C$ is contained in the domain of $T$, assumption (iii) implies that
$S_0\subseteq S_*$. So, the existence of solutions of \eqref{dual}
implies $S_*\not=\emptyset$. For the inclusion $S_*\subseteq S_0$ to hold, an
extra condition, such as  pseudo monotonicity,  is needed (see \cite[Lemma~1]{konnov1}).

Assumption (i) has been used in
\cite{ye} for variational inequalities with a
point-to-point operator, as well as in \cite{jean, bui} for the equilibrium
problem (i.e., for the point-to-point case). As far as we know, assumptions (i) and (iii) haven't been used for the point-to-set case. Condition (i) along with $S_0=S_*$ is a well-known example of an assumption that does not involve a monotonicity requirement on T. For example, in \cite{konnov-97, yu-re-hu, svaiter}, this assumption is used for the point-to-point case.

The algorithm considered in \cite{ye} uses assumption (i) for the point-to-point case. The difference between their method and ours can be explained as follows. In \cite{ye}, the current point $x^k$ is projected onto a subset containing the solution set. At each iteration of our algorithm, we project the same point $x^0$ onto a set, which is strictly smaller than the one used in \cite{ye}. The way of defining the iterates in \cite{ye}, allows for the use of Fej\'er convergence, which is a classical tool for this kind of projection algorithm. In the present paper, instead of projecting the current iterate, we project the fixed point $x^0$, therefore we do not use of Fej\'er convergence, as in \cite{ye}. Moreover, if our sequence does not have finite termination, the limit may be characterized as the closest point to the initial iterate $x^0$ in the set $\bar{co}(\tilde{S}_*)$ (see Proposition \ref{pertenece} and Theorem \ref{acumul1}). Specifically, $x^k \rightarrow \bar{x}$ where $\bar{x}=P_{\bar{co}(\tilde{S}_*)}(x^0)$. Unlike \cite{ye}, our method may be applied to point-to-set monotone, pseudo- or quasi-monotone variational inequalities, such as those in \cite{re-yu, ceng, neto, nils}.

Other projection algorithms for solving variational inequalities are found in \cite{ldmu,cegire,fache,fache2}. The main difference between our algorithm and  \cite{ldmu,cegire,fache,fache2} lies in the structure of the problem and the techniques used proving the convergence. For instance, \cite{ldmu,cegire} considers a Lipschitz continuous point-to-point operator, making the analysis of these algorithms substantially different from ours. In addition, in \cite{ldmu,cegire}, no linesearch is considered. References \cite{fache, fache2} do use a linesearch, but it is different from the one we use. Moreover, another difference between our method and the ones in \cite{fache, fache2} is the constrained set $C$, which is assumed to be compact. The additional assumption $S_0=S_*$, requires a different analysis of convergence.

This paper is organized as follows. In section \ref{preli} we introduce the notation, definitions, and some useful results. In Section \ref{linesearch} we define the linesearch and algorithm. Section \ref{conver} provides the convergence analysis of our algorithm. Section \ref{numer} presents numerical examples and comparisons. Finally, Section \ref{conclu} contains our conclusions and open problems.

\section{Preliminaries}\label{preli}

In this section, we introduce some known definitions, facts and properties necessary in the sequel. First, we fix the notation and recall some definitions. The inner product in $\RR^n$ is denoted by $\la \cdot , \cdot \ra$ and its norm by $\|\cdot\|$. For a nonempty, convex and closed subset, $C\subseteq\RR^n$, the {\em orthogonal projection of $x$ onto $C$} will be denoted by $P_C(x)$, and defined as the unique point in $C$ such that $\| P_C(x)-x\| \le \|y-x\|$ for all $y\in C$. Being $(x^k)_{k\in\NN}$ a sequence in $\RR^n$, we denote by $Cl(x^k)_{k\in \NN}$ the set of its cluster points. For the point-to-set operator $T$, we define the {\em domain of $T$} as $\dom(T):=\{x\in \RR^n\::\: T(x)\not=\emptyset\}$, and the {\em graph of $T$} as $Gr(T):=\{(x,u)\in \RR^n\times\RR^n\::\: u\in T(x)\}$.

 We begin with a concept of continuity for point-to-set operators. Our definitions are standard and taken from \cite{bur-iu}.

\begin{definition}\label{def:cont}
Let $T:\dom(T)\subset \RR^n \rightrightarrows \RR^n$ be a point-to-set mapping.
\begin{itemize}
\item [(a)] \label{osc} $T$ is said to be {\em outer-semicontinuous} ({\it OSC}), if and only if,  the graph of $T$ is closed.
\item [(b)] \label{isc} $T$ is said to be {\em inner-semicontinuous} ({\it ISC}) at $x\in \dom(T)$, if and only if, for any $y\in T(x)$ and for any sequence $(x^k)_{k\in\NN}\subset \dom (T)$, such that $x^k \to x$; there exist a sequence $(y^k)_{k\in\NN}$, such that $y^k\in T(x^k)$ for all $k\in\NN$ and $y^k \to y$. $T$ is {\it ISC} if it is {\it ISC} for every $x \in \dom(T)$.
\item [(c)] $T$ is said to be {\em upper-semicontinuous} ({\it USC}) at $x\in \dom(T)$, if and only if, for all  open $W \subset \RR^n$, such that $W \supset T(x)$; there exists a neighborhood $U$ of $x$, such that $T(x' )\subset W$ for all $x' \in U$. $T$ is {\it OSC} if it is {\it OSC} for every $x \in \dom(T)$.
\item [(d)] $T$ is said to be {\em continuous} if it is {\it ISC} and {\it OSC}.
\item [(e)] $T$ is said to be {\em locally bounded} at $x \in \dom(T)$ if there exist a neighborhood $U$ of $x$ such that
$$T(U):=\bigcup \{T(y):y\in U\}$$
is a bounded set. It is called {\em locally bounded} on $C\subseteq \RR^n$ if this holds at every $x\in C$.
 \end{itemize}
\end{definition}
\begin{remark}\label{localbound}
 Note that in finite dimensional spaces, locally bounded is equivalent to mapping bounded sets into bounded sets, for more details, see \cite{rockafe}, Proposition 5.15.
  \end{remark}
 The following Proposition relates the sets $S_0$ and $S_*$.
\begin{proposition}\label{soin}
Let the point-to-set mapping $T:\dom(T)\subset \RR^n\rightrightarrows \RR^n$ be {\it ISC}, then $S_0\subseteq S_*$.
\end{proposition}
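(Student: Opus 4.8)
The plan is to show that any $x_* \in S_0$ also solves \eqref{prob}, via a Minty-type segment argument combined with the inner-semicontinuity of $T$. First I would exploit the standing assumption $C \subseteq \dom(T)$: since $x_* \in C$, the image $T(x_*)$ is nonempty, so I may fix an arbitrary $y \in T(x_*)$ once and for all. The goal then becomes to prove that this single element $y$ serves as the required witness, i.e.\ that $\la y, x - x_* \ra \geq 0$ for every $x \in C$; that will exhibit $u_* := y$ and place $x_*$ in $S_*$.

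Fix an arbitrary $x \in C$. Using convexity of $C$, I would form the segment $x_t := (1-t)x_* + t x \in C$ for $t \in (0,1]$, noting that $x_t \to x_*$ as $t \downarrow 0$. Choosing $t_k \downarrow 0$ and setting $x^k := x_{t_k} \in C \subseteq \dom(T)$, I would invoke ISC of $T$ at $x_*$ for the prescribed $y \in T(x_*)$: this yields a sequence $y^k \in T(x^k)$ with $y^k \to y$. Since $x_* \in S_0$, the dual inequality \eqref{dual} applies to the pair $x^k \in C$ and $y^k \in T(x^k)$, giving $\la y^k, x^k - x_* \ra \geq 0$. Because $x^k - x_* = t_k (x - x_*)$ with $t_k > 0$, this reduces to $\la y^k, x - x_* \ra \geq 0$, and passing to the limit $k \to \infty$ produces $\la y, x - x_* \ra \geq 0$.

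As $x \in C$ was arbitrary while $y$ was fixed at the outset, the same $y =: u_*$ satisfies $\la u_*, x - x_* \ra \geq 0$ for all $x \in C$, which is precisely the assertion $x_* \in S_*$. The crux of the argument, and the reason inner- rather than outer-semicontinuity is the correct hypothesis here, is that the witness $u_* = y \in T(x_*)$ must be selected \emph{independently} of $x$: ISC lets me fix the target $y$ first and approximate it along any sequence $x^k \to x_*$, whereas a closed-graph (OSC) argument would only furnish a limit depending on the particular sequence $(y^k)$, with no guarantee that one fixed element of $T(x_*)$ works uniformly over all $x \in C$. The remaining steps, namely the convexity inclusion $x_t \in C$ and the scalar simplification by $t_k$, are routine.
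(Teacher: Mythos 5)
Your proof is correct and follows essentially the same route as the paper's: fix $u_*\in T(x_*)$, move along the segment $x_t=(1-t)x_*+tx\in C$, use ISC to produce selections converging to $u_*$, apply the dual inequality at the segment points, divide by the parameter and pass to the limit. The only differences are cosmetic (a sequence $t_k\downarrow 0$ in place of the continuous parameter $\alpha\to 0$, which in fact matches the paper's sequential definition of ISC more literally), and your remark on why the witness must be chosen before $x$ correctly identifies the role of inner-semicontinuity.
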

\begin{proof}
Take $x^* \in S_0\cap \dom(T)$. Then for all $(y,v)\in Gr(T)$ such that $y\in C\cap \dom(T)$, we have $\la v,y-x^*\ra\geq 0$. Now, for all $\alpha \in (0,1)$ by convexity of $C$ we have that $y_{\alpha}:=(1-\alpha)x^*+\alpha y \in C$ for all $y\in C\cap \dom(T)$. Taking $u_* \in T(x^*)$, there exist $v_*^{\alpha}\in T(y_{\alpha})$ such that $\lim_{\alpha \to 0}v_*^{\alpha}=u_*$. Now, using that $x^*\in S_0$, we obtain:
$$
0\leq \la v_*^{\alpha},y_{\alpha}-x^*\ra =\alpha\la  v_*^{\alpha},y-x^*\ra.
$$
 Dividing by $\alpha >0$ and taking the limit when $\alpha$ goes to zero, we establish that $\la u_*,y-x^*\ra\geq 0$, for all $y\in C$. Thus $x^* \in S_*$.
\end{proof}
\begin{remark}
The inclusion $S_0\subseteq S_*$ has been established in \cite{konnov-book, konnov1, tan}. These papers assume $T$ to be {\it USC} and such that $T(x)$ is
 compact for all $x\in \dom(T)$. More precisely, if $T$ has closed images, then upper-semicontinuity implies outer-semicontinuity (see \cite[Proposition 2.5.12 (b)(c)]{bur-iu}) and therefore our analysis includes the cases considered in \cite{konnov-book, konnov1, tan}. As far as we know, Proposition \ref{soin} is new for $T$ point-to-set and {\it ISC}.
 An example showing an operator $T$ which is {\it OSC} but not {\it USC} is to be found in \cite[Example 2.5.8]{bur-iu}. While {\it upper-semicontinuity} can be seen as a natural
extension of the point-to-point continuity, it cannot express properly continuity of mappings in which $T(x)$ in unbounded (see, e.g., \cite[Example 2.5.8]{bur-iu}). Hence our
choice of {\it OSC} over {\it USC}. In Example \ref{ptso}, we implement our algorithm for a point-to-set operator which is not {\it USC} but is continuous (and hence {\it ISC}) in the sense of Definition
 \ref{def:cont}.
\end{remark}

 Now, we present some important facts on orthogonal projections, which proves useful, when defining the Linesearch presented in Section \ref{linesearch}.
\begin{fact}\label{proj}
Let $C\subseteq \RR^n$ be closed and convex. For all $x,y\in \RR^n$ and all $z\in C $, the following holds:
\begin{enumerate}
\item\label{proj-i} $\|P_C(x)-P_C(y)\|^2 \leq \|x-y\|^2-\|(x-P_C(x))-(y-P_C(y))\|^2.$
\item\label{proj-ii} $\la x-P_C(x),z-P_C(x)\ra \leq 0.$
\end{enumerate}
\end{fact}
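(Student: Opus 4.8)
The plan is to prove part (ii) first, since it is the variational characterization of the projection, and then to derive part (i) from two applications of (ii) together with a short algebraic expansion.

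First I would establish (ii). Writing $p := P_C(x)$, the defining property of the projection is that $p$ minimizes the map $y \mapsto \|y - x\|^2$ over $y \in C$. Fix any $z \in C$; by convexity of $C$, the point $p + t(z - p) = (1-t)p + tz$ lies in $C$ for every $t \in (0,1]$. The minimality of $p$ then yields
$$\|x - p\|^2 \le \|x - p - t(z - p)\|^2 = \|x - p\|^2 - 2t\la x - p, z - p\ra + t^2 \|z - p\|^2.$$
Cancelling $\|x-p\|^2$, dividing by $t > 0$, and letting $t \to 0^+$ gives $\la x - p, z - p\ra \le 0$, which is exactly (ii).

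Next I would prove (i) from (ii). Set $p := P_C(x)$ and $q := P_C(y)$. Applying (ii) with the pair $(x, z = q)$ gives $\la x - p, q - p\ra \le 0$, hence $\la p - q, x - p\ra \ge 0$; applying (ii) with the pair $(y, z = p)$ gives $\la y - q, p - q\ra \le 0$, hence $\la p - q, y - q\ra \le 0$. Subtracting these, we obtain $\la p - q, (x - p) - (y - q)\ra \ge 0$. On the other hand, writing $u := x - p$ and $v := y - q$ and using $x - y = (p - q) + (u - v)$, a direct expansion of the squared norm shows
$$\|x - y\|^2 - \|(x - p) - (y - q)\|^2 = \|p - q\|^2 + 2\la p - q, u - v\ra.$$
Combining this identity with the inequality $\la p - q, u - v\ra \ge 0$ established above yields $\|p - q\|^2 \le \|x - y\|^2 - \|(x - p) - (y - q)\|^2$, which is (i).

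There is no serious obstacle here, as both statements are classical; the only point requiring care is the bookkeeping in (i), namely correctly pairing the two instances of (ii) so that subtracting them produces the sign needed for the firmly nonexpansive inequality, and verifying the algebraic identity relating the two squared norms.
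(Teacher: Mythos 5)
Your proof is correct. The paper itself does not prove Fact \ref{proj}; it only cites \cite[Proposition 4.8 and Theorem 3.14]{librobauch}, and your argument is precisely the standard one given there: part (ii) follows from the first-order optimality condition for the convex quadratic $y\mapsto\|y-x\|^2$ along the segment $(1-t)p+tz\subset C$, and part (i) (firm nonexpansiveness) follows by applying (ii) twice, once at $x$ with $z=P_C(y)$ and once at $y$ with $z=P_C(x)$, adding the resulting inequalities to get $\la P_C(x)-P_C(y),(x-P_C(x))-(y-P_C(y))\ra\ge 0$, and expanding $\|x-y\|^2$ along the decomposition $x-y=(P_C(x)-P_C(y))+\bigl((x-P_C(x))-(y-P_C(y))\bigr)$. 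All signs and the algebraic identity check out, so the proposal is a complete, self-contained substitute for the citation.
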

\proof
See \cite[Proposition 4.8 and Theorem 3.14]{librobauch}.
\endproof
\begin{remark}\label{procon}
  By Fact \ref{proj}\ref{proj-i}, the map $P_C$ is Lipschitz continuous, and hence it maps bounded sets into bounded sets.
\end{remark}
\subsection{Some useful results}
The following three results are standard in the literature of variational inequalities. Here, for the convenience of the reader, we include their proofs.

The next property shall be used for the stopping criteria of the algorithm as well as in the finite termination of the {\bf Linesearch \ref{feasible}}.

\begin{proposition}\cite[Proposition 1.5.8]{pang}\label{parada}
Given $T:\dom(T)\subseteq \RR^n\ \rightrightarrows \RR^n$ and $C\subset \dom(T)\subset \RR^n$. If for some $u\in T(x)$ and $\beta>0$,
$x=P_C(x-\beta u)$, then $x\in S_*$.
\end{proposition}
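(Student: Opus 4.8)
The plan is to read off the conclusion directly from the variational characterization of the projection, namely Fact \ref{proj}\ref{proj-ii}, applied to the specific point $x-\beta u$. The whole argument hinges on recognizing that the fixed-point relation $x = P_C(x-\beta u)$ lets us substitute $P_C(x-\beta u)=x$ into that inequality and then simplify.

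First I would record the trivial but necessary observation that $x\in C$: since $x$ is by hypothesis equal to a projection onto $C$, and $P_C$ takes values in $C$, we immediately have $x\in C$. This is needed because membership in $S_*$ requires the candidate solution to be feasible.

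Next I would invoke Fact \ref{proj}\ref{proj-ii} with the role of the generic point played by $x-\beta u$, so that $P_C(x-\beta u)=x$, and with $z$ an arbitrary element of $C$. This yields
\[
\la (x-\beta u)-x,\; z-x\ra \le 0 \qquad \text{for all } z\in C,
\]
which simplifies to $\la -\beta u,\, z-x\ra\le 0$, i.e. $-\beta\,\la u,\, z-x\ra\le 0$. Since $\beta>0$, dividing through by $-\beta$ reverses the inequality and gives $\la u,\, z-x\ra\ge 0$ for every $z\in C$. Together with $x\in C$ and $u\in T(x)$, this is exactly the statement that $x$ solves Problem \eqref{prob}, so $x\in S_*$, with $u_*=u$ serving as the required witness in $T(x)$.

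I do not anticipate a genuine obstacle here: the result is a one-line consequence of the obtuse-angle (variational) characterization of the projection once the substitution $P_C(x-\beta u)=x$ is made. The only point requiring a small amount of care is the sign bookkeeping when dividing by $-\beta$, and the sole structural ingredient that must be verified is the feasibility $x\in C$, both of which are immediate.
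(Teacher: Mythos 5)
Your proof is correct and follows essentially the same route as the paper: both apply Fact \ref{proj}\ref{proj-ii} at the point $x-\beta u$, substitute $P_C(x-\beta u)=x$, and divide by $\beta>0$ to obtain $\la u, z-x\ra\ge 0$ for all $z\in C$. Your explicit note that $x\in C$ (as a value of $P_C$) is a small point the paper leaves implicit, but the argument is the same.
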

\proof
Due to the Fact \ref{proj}\ref{proj-ii}, we have $\la x-\beta u - P_C(x-\beta u),y-P_C(x-\beta u)\ra\leq 0$ for all $y\in C$, using that $x=P_C(x-\beta u)$ as well as $\beta >0$, it follows $\la u, y-x\ra\geq 0$ for all $y\in C$. Proving that $x \in S_*$.
\endproof

Now we show a lemma which ensures that the hyperplanes used in the algorithm contains the solution set of Problem \eqref{dual}.

\begin{lemma}\cite[Lemma 2.17]{yu-re-hu}\label{propseq}
For any $z\in C$ and $u \in T(x)$, define $H(z,u) := \big\{ y\in \RR^n : \la
u,y-z\ra\le 0\big \}$.
Then, $S_0\subseteq H(z,u)$.
\end{lemma}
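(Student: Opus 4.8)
The plan is to obtain the inclusion directly from the definition of the dual solution set in \eqref{dual}, with no auxiliary machinery. First I would fix an arbitrary $x_* \in S_0$ and unpack what this membership means: by \eqref{dual}, for every $x \in C$ and every $u \in T(x)$ we have $\la u, x - x_* \ra \ge 0$. The entire content of the lemma is then a single \emph{specialization} of this defining inequality.

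Concretely, I would apply the dual inequality at $x = z$, using the same element $u \in T(z)$ that anchors the halfspace $H(z,u)$. This gives $\la u, z - x_* \ra \ge 0$, and rewriting the inner product yields $\la u, x_* - z \ra \le 0$, which is exactly the membership condition for $x_* \in H(z,u)$ coming from $H(z,u) = \big\{ y \in \RR^n : \la u, y - z \ra \le 0 \big\}$. Since $x_* \in S_0$ was arbitrary, I conclude $S_0 \subseteq H(z,u)$.

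I expect no genuine obstacle here: the statement is an immediate reformulation of the definition of $S_0$, and it requires neither continuity nor monotonicity of $T$, nor any projection identity. The only point deserving a moment of care is the bookkeeping on the pair defining the halfspace, namely that the vector $u$ must be an element of $T$ evaluated at the anchor point $z$, so that \eqref{dual} can legitimately be invoked at $x = z$; with that reading the argument is a one-line consequence of the defining inequality.
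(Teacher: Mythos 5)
Your proof is correct and is essentially the paper's own argument: both simply specialize the defining inequality of $S_0$ in \eqref{dual} to the anchor point $z$ and a vector $u\in T(z)$, then flip the sign to land in $H(z,u)$. Your write-up is in fact slightly more careful than the paper's, which contains a sign slip (it writes $\la u,x-x_*\ra\le 0$ where \eqref{dual} gives $\ge 0$) and whose statement's ``$u\in T(x)$'' must indeed be read as $u\in T(z)$, exactly the bookkeeping point you flag.
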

\proof
For $x_{*}\in S_0$ we have that, $\la u,x-x_*\ra\leq 0$  for all $(x,u)\in Gr(T)$ and with $x\in C$, then $x_{*}\in H(z,u)$.
\endproof

The following lemma is crucial, when proving that the hyperplanes used in the algorithm, separate the current iterate from the solution set.
\begin{lemma}\label{separa}
Let $C\subset \RR^n$ be a  closed, convex and nonempty set. Take $x\in C$ and $z=P_C(x-\beta u)$, with $\beta >0$ and $u\in \RR^n$. Assume that:
\begin{itemize}
\item[(i)] $\ox=\alpha z+(1-\alpha)$, with $\alpha \in (0,1)$.
\item [(ii)] $(\ox,\overline{u}), (x,u)\in Gr(T)$.
\item [(iii)] $\la \overline{u},x-z\ra\geq \delta \la u,x-z\ra$.
\end{itemize}
Then with $H(x,u)$ as in Lemma \ref{propseq}, $x\in H(\ox,\overline{u})$ implies that $x\in S_*$.
\end{lemma}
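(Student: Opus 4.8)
The plan is to translate the hypothesis $x\in H(\ox,\overline{u})$ into a one-sided sign condition on the quantity $\la u,x-z\ra$, to obtain the opposite sign from the projection characterization of $z$, and then to squeeze these two facts together to force $x=z$; once that is established, Proposition~\ref{parada} closes the argument immediately. Reading (i) as $\ox=\alpha z+(1-\alpha)x$, the whole proof revolves around the elementary but decisive identity $x-\ox=\alpha(x-z)$.

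First I would exploit this convex-combination structure. Since $x-\ox=\alpha(x-z)$, the membership $x\in H(\ox,\overline{u})$, which by Lemma~\ref{propseq} means $\la\overline{u},x-\ox\ra\le 0$, becomes $\alpha\la\overline{u},x-z\ra\le 0$; using $\alpha\in(0,1)$ we may divide by $\alpha>0$ to get $\la\overline{u},x-z\ra\le 0$. Feeding this into hypothesis (iii) gives $\delta\la u,x-z\ra\le\la\overline{u},x-z\ra\le 0$, and because $\delta>0$ (this positivity, coming from the linesearch parameters, is what makes the step work) we conclude $\la u,x-z\ra\le 0$.

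The crux is to produce the reverse inequality from the projection. Applying Fact~\ref{proj}\ref{proj-ii} to $z=P_C(x-\beta u)$ with the test point $x\in C$ yields $\la(x-\beta u)-z,\,x-z\ra\le 0$, which rearranges to $\|x-z\|^2\le\beta\la u,x-z\ra$; since $\beta>0$ this gives $\la u,x-z\ra\ge\|x-z\|^2/\beta\ge 0$. Combining with the bound from the previous paragraph forces $\la u,x-z\ra=0$, whence $\|x-z\|^2\le 0$ and therefore $x=z$.

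Finally, $x=z=P_C(x-\beta u)$ with $u\in T(x)$ by (ii) and $\beta>0$, so Proposition~\ref{parada} yields $x\in S_*$, as claimed. I expect the main obstacle to be recognizing and setting up the two-sided squeeze: the separation hypothesis supplies $\la u,x-z\ra\le 0$ while the projection inequality supplies $\la u,x-z\ra\ge 0$, and one must see that the right test point to insert into Fact~\ref{proj}\ref{proj-ii} is $x$ itself (rather than an element of $S_0$), so that the two bounds collapse exactly onto $x=z$. The only other point to watch is the correct reading of the degenerate-looking hypothesis (i) as the convex combination $\ox=\alpha z+(1-\alpha)x$, together with the sign roles of $\alpha$, $\beta$, and $\delta$.
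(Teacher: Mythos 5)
Your proposal is correct and follows essentially the same route as the paper: both use $x-\ox=\alpha(x-z)$ to turn $x\in H(\ox,\overline{u})$ into $\la\overline{u},x-z\ra\le 0$, combine with (iii) and the projection inequality from Fact~\ref{proj}\ref{proj-ii} applied at the test point $x$ to force $\|x-z\|^2\le 0$, and finish with Proposition~\ref{parada}. The only difference is presentational (you split the paper's single chain of inequalities into an upper and a lower bound on $\la u,x-z\ra$), and you correctly read the typo in hypothesis (i) as $\ox=\alpha z+(1-\alpha)x$.
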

\proof
As $x\in H(\ox, \overline{u})$, we have that $\la \overline{u}, x-\ox\ra\le 0$. Using the Fact  \ref{proj} \ref{proj-ii}, we have
\begin{align}\label{deja ver}\nonumber
0\geq&\, \la \overline{u}, x-\ox\ra=\alpha\la
\overline{u},x-z\ra\ge\alpha\delta\big\la u, x -z\big \ra  \\  =&\,
\frac{\alpha}{\beta}\la z-(x-\beta  u ),x-z\ra+\frac{\alpha}{\beta}\|x-z\|^2 \nonumber \\ \geq &\, \frac{\alpha}{\beta}\delta\|x-z\|^2
\ge \, 0,
\end{align}
implying that $x=z$. By Proposition \ref{parada}, we conclude that $x\in S_*$.
\endproof

The next  result  will be used for proving the boundedness of the sequence generated by the algorithm and will play an important role for the convergence analysis presented in Section 4.

\begin{lemma}\cite[ Lemma 2.10]{yu-re-hu}\label{l:lim-2} Let $S$ be a nonempty, closed and convex
set. Take $x^0,x\in\RR^n$. Assume that $x^0\notin S$ and that
$S\subseteq W(x):= \{y\in \RR^n :\la y-x,x^0-x\ra\le0\}$. Then, $
x\in B[\tfrac{1}{2}(x^0+\ox),\tfrac{1}{2}\rho]$, where
$\ox=P_{S}(x^0)$ and $\rho={\rm dist}(x^0, S)=\|x_0-P_{S}(x_0)\|$.
\end{lemma}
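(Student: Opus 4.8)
The plan is to recognize the prescribed ball as the one having the segment $[x^0,\ox]$ as a diameter, and then to translate membership in that ball into a single inner-product inequality that the hypothesis $S\subseteq W(x)$ hands us almost for free.

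First I would observe that the centre $\tfrac12(x^0+\ox)$ is the midpoint of $x^0$ and $\ox$, while the radius $\tfrac12\rho=\tfrac12\N{x^0-\ox}$ is exactly half the length of $[x^0,\ox]$; hence both $x^0$ and $\ox$ lie on the bounding sphere, and $B[\tfrac12(x^0+\ox),\tfrac12\rho]$ is the closed ball with diameter $[x^0,\ox]$. I would then reduce the desired inclusion $x\in B[\tfrac12(x^0+\ox),\tfrac12\rho]$ to the inequality $\la x-x^0,\,x-\ox\ra\le 0$. This is the Thales characterization of a ball by its diameter, which I would verify directly by expanding
$$\N{x-\tfrac12(x^0+\ox)}^2=\tfrac14\N{(x-x^0)+(x-\ox)}^2$$
and comparing it with $\tfrac14\rho^2=\tfrac14\N{(x-\ox)-(x-x^0)}^2$; subtracting the two right-hand sides, the squared-norm terms cancel and one is left with $\la x-x^0,\,x-\ox\ra$, so that $\N{x-\tfrac12(x^0+\ox)}^2\le\tfrac14\rho^2$ holds if and only if $\la x-x^0,\,x-\ox\ra\le 0$.

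It then only remains to establish this inner-product inequality, and here I would simply invoke the hypothesis: since $\ox=P_S(x^0)\in S\subseteq W(x)$, the definition of $W(x)$ gives $\la \ox-x,\,x^0-x\ra\le 0$, and changing the sign in both factors turns this into $\la x-\ox,\,x-x^0\ra\le 0$, which is precisely what the previous step requires. There is no genuinely hard step: the whole content is the sign bookkeeping that identifies the stated ball with the diameter ball over $[x^0,\ox]$ and matches its defining inequality with the membership $\ox\in W(x)$. I would note in passing that the projection property of $\ox$ (Fact \ref{proj}\ref{proj-ii}) is not actually needed beyond defining $\ox$ and $\rho$, and that the assumption $x^0\notin S$ serves only to guarantee $\ox\neq x^0$, so that $\rho>0$ and the ball is nondegenerate.
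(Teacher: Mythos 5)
Your proof is correct and follows essentially the same route as the paper: both arguments use $\ox=P_S(x^0)\in S\subseteq W(x)$ to obtain $\la \ox-x,x^0-x\ra\le 0$ and then identify this inner product, via expansion around the midpoint $\tfrac12(x^0+\ox)$, with $\N{x-\tfrac12(x^0+\ox)}^2-\tfrac14\rho^2$. The paper phrases the computation with the auxiliary vectors $v=\tfrac12(x^0+\ox)$ and $r=\tfrac12(x^0-\ox)$ rather than as the Thales characterization, but the algebra is identical.
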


\proof  First, since $S$ is convex and closed, $\ox=P_{S}(x^0)$ and $\rho={\rm dist}(x^0,S)$ are well-defined. Moreover, $S \subseteq W(x)$ implies that $\ox=P_{S}(x^0)\in W(x)$. Define $v:=\tfrac{1}{2}(x_0+\ox)$ and $r:=x^0-v=\tfrac{1}{2}(x^0-\ox)$, then $\ox-v=-r$ and $\|r\|=\tfrac{1}{2}\|x^0-\ox\|=\tfrac{1}{2}\rho$. Since $\bar{x}\in W(x)$, we write
\begin{equation*}
\begin{aligned}
0&\geq\la \ox-x,x^0-x\ra =\scal{\ox-v+v-x}{x^0-v+v-x}\\
&=\scal{-r+(v-x)}{r+(v-x)}=\|v-x\|^2-\|r\|^2.
\end{aligned}
\end{equation*}
This proves the result.
\endproof

The following proposition serves to show that the distance between consecutive iterates tends to zero. It is well-know, however hard to track down, for this reason, we include its proof here.
\begin{proposition}\label{wx}
Let $x^0, x \in \RR^n$ and $W(x)=\{y\in \RR^n :\la y-x,x^0-x\ra\le0\}$, then it holds that  $x=P_{W(x)}(x^0)$.
\end{proposition}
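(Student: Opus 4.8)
The plan is to verify that $x$ satisfies the defining property of the orthogonal projection of $x^0$ onto the half-space $W(x)$, namely that $x\in W(x)$ and that $\la x^0-x, y-x\ra\le 0$ for all $y\in W(x)$; by the characterization in Fact \ref{proj}\ref{proj-ii} this is precisely what it means for $x$ to equal $P_{W(x)}(x^0)$. First I would observe that $x\in W(x)$ trivially, since substituting $y=x$ into the defining inequality of $W(x)$ gives $\la x-x,x^0-x\ra=0\le 0$; hence $x$ is a feasible candidate and the projection is well-defined because $W(x)$ is a nonempty closed convex set.

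Next I would establish the variational inequality characterizing the projection. For an arbitrary $y\in W(x)$ we have, by definition of $W(x)$, that $\la y-x,x^0-x\ra\le 0$. Rewriting the inner product $\la x^0-x,y-x\ra=\la y-x,x^0-x\ra$ by symmetry of the inner product, this is exactly the inequality $\la x^0-x,y-x\ra\le 0$ for all $y\in W(x)$. Together with $x\in W(x)$, the standard obtuse-angle characterization of the projection onto a closed convex set (Fact \ref{proj}\ref{proj-ii}, read with the roles of the point being projected and the projection image identified as $x^0$ and $x$ respectively) forces $x=P_{W(x)}(x^0)$.

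I do not anticipate a genuine obstacle here, since the result is essentially the observation that the projection of a point onto a half-space whose bounding hyperplane passes through $x$ with normal direction $x^0-x$ lands exactly at $x$. The only point requiring a little care is the bookkeeping of which vector plays the role of the ``point'' and which the ``projection'' when invoking Fact \ref{proj}\ref{proj-ii}: one must match $x^0-P_C(x^0)$ with $x^0-x$ and $z-P_C(x^0)$ with $y-x$, so that the characterizing inequality $\la x^0-x,y-x\ra\le 0$ is read off correctly. Once the direction of the inequality is aligned with the definition of $W(x)$, the claim is immediate, and no nontrivial estimation or limiting argument is needed.
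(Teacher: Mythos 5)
Your proof is correct, but it runs in the opposite direction from the paper's. You verify that the \emph{candidate} $x$ satisfies the variational characterization of the projection --- $x\in W(x)$ and $\la x^0-x,y-x\ra\le 0$ for all $y\in W(x)$, the latter being literally the defining inequality of $W(x)$ --- and then invoke the \emph{sufficiency} of that obtuse-angle condition to conclude $x=P_{W(x)}(x^0)$. The paper instead works with the unknown point $P_{W(x)}(x^0)$: it uses only the \emph{necessary} direction actually stated in Fact \ref{proj}\ref{proj-ii} (applied with $z=x$) together with the fact that $P_{W(x)}(x^0)\in W(x)$ (which yields $\la P_{W(x)}(x^0)-x,x^0-x\ra\le 0$ from the definition of $W(x)$), and sums the two inequalities to get $\|P_{W(x)}(x^0)-x\|^2\le 0$ directly. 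The one caveat for your route is that Fact \ref{proj}\ref{proj-ii} as stated in the paper gives only the ``only if'' half of the characterization; the ``if'' half you rely on is standard (and is indeed part of Theorem 3.14 of the cited Bauschke--Combettes reference), but strictly speaking you are using a slightly stronger tool than the paper provides, whereas the paper's two-inequality summation needs nothing beyond what is on the page. Your version is arguably more transparent geometrically; the paper's is more self-contained.
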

\proof
Since, $x\in W(x)$ and $P_{W(x)}(x^0)\in W(x)$ using Proposition \ref{proj} \ref{proj-ii} we have,
\begin{eqnarray}
\la P_{W(x)}(x^0)-x,x^0-x\ra\leq& 0\label{wx0} \\
\la P_{W(x)}(x^0)-x,P_{W(x)}(x^0)-x^0\ra\leq &0.\label{wx1}
\end{eqnarray}
Summing \eqref{wx0} and \eqref{wx1} we obtain $\| P_{W(x)}(x^0)-x\|^2\leq 0$, then $x=P_{W(x)}(x^0)$.
\endproof
\section{The linesearch and the algorithm}\label{linesearch}

Our linesearch is a modification of a search strategy first introduced in 1997, see \cite{iusem-be}. The authors of \cite{iusem-be} use the square of the norm on the right-hand
side of the inequality in the {\bf Linesearch \ref{feasible}} ({\bf F} stands for {\it feasible} direction method ). Later on, Konnov in \cite{konnov-97} uses a linesearch as the one
 we use below,  but for point-to-point mappings. Both use the assumption $S_0=S_*$.

\begin{center}\fbox{\begin{minipage}[b]{\textwidth}
\begin{linesr}{F}
{\rm(feasible direction)}
\label{feasible}
\medskip
{\bf Input:} $x\in C$,  $\beta>0$ and $\dd\in(0,1)$.

Set $\alpha\leftarrow 1$ and $\theta \in (0,1)$. Define $z=P_C(x-\beta u)$ with $u\in T(x)$

\begin{retraitsimple}
\item[] {\bf If } $\forall u_{\alpha} \in T\big(\alpha z+(1-\alpha) x\big)$,  $\la u_{\alpha}, x-z\ra
< \delta \la u, x-z\ra$ {\bf then} $\alpha\leftarrow\theta \alpha$, {\bf Else} Return $\alpha$.
\end{retraitsimple}
{\bf Output:} $(\alpha)$.
\end{linesr}\end{minipage}}\end{center}

Let $C$ be a convex and closed set. As mentioned in the Introduction, in our analysis we will use the following assumptions on $T$:
\begin{enumerate}[leftmargin=0.5in, label=({\bf A\arabic*})]
\item\label{a0} The feasible set $C$ is contained in the domain of $T$, i.e., $C\subset \dom(T)$.
\item\label{a1} $T$ continuous on $C$, in the sense of Definition \ref{def:cont}(d).
\item\label{a3} $T$ is locally bounded on $C$.
\item\label{a2}  The solution set $S_0$ of the Dual Problem (\ref{dual}) is not empty.
\end{enumerate}
The fact that the {\bf Linesearch \ref{feasible}} has finite termination (and hence, is well defined) is proved next.
\begin{lemma}\label{feasible-well}
Assume that \ref{a0} holds and $T$ is {\it ISC} at every point of $C$. If $x\in C$ and $x\notin S_*$, then {\bf Linesearch \ref{feasible}} stops after a finite number of steps.
\end{lemma}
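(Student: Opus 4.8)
The plan is to argue by contradiction, exploiting inner-semicontinuity to pass to the limit along the sequence of stepsizes generated by the search. Suppose the procedure never returns. Then it produces the stepsizes $\alpha_k=\theta^k$ with $\theta\in(0,1)$, and for every $k$ the \textbf{If} condition holds, i.e.
$$\la u_{\alpha_k},x-z\ra<\delta\,\la u,x-z\ra \qquad\text{for all } u_{\alpha_k}\in T\big(\alpha_k z+(1-\alpha_k)x\big).$$
Set $x^k:=\alpha_k z+(1-\alpha_k)x$. Since $\theta\in(0,1)$ we have $\alpha_k\to0$, hence $x^k\to x$, and each $x^k\in C$ by convexity of $C$ together with \ref{a0}.

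Next I would invoke inner-semicontinuity at $x$. Take the very element $u\in T(x)$ used to define $z$ in the linesearch (it exists because $x\in C\subseteq\dom(T)$). By Definition \ref{def:cont}(b), since $x^k\to x$ there is a sequence $u^k\in T(x^k)$ with $u^k\to u$. Because the displayed strict inequality holds for \emph{every} element of $T(x^k)$, it holds in particular for $u^k$, so $\la u^k,x-z\ra<\delta\,\la u,x-z\ra$ for all $k$. Letting $k\to\infty$ and using continuity of the inner product yields $\la u,x-z\ra\le\delta\,\la u,x-z\ra$, that is, $(1-\delta)\la u,x-z\ra\le0$.

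The decisive quantitative step is to show $\la u,x-z\ra>0$, which contradicts the last inequality because $\delta\in(0,1)$. Here I would use the projection characterization. Since $z=P_C(x-\beta u)$ and $x\in C$, applying Fact \ref{proj}\ref{proj-ii} to the point $x-\beta u$ and the feasible point $x$ gives $\la (x-\beta u)-z,\,x-z\ra\le0$, which rearranges to $\|x-z\|^2\le\beta\,\la u,x-z\ra$, whence $\la u,x-z\ra\ge\tfrac1\beta\|x-z\|^2\ge0$. To make this strict I would rule out $x=z$: if $x=z=P_C(x-\beta u)$, then Proposition \ref{parada} forces $x\in S_*$, contradicting the hypothesis $x\notin S_*$. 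Hence $\|x-z\|>0$, so $\la u,x-z\ra>0$, and combined with $(1-\delta)\la u,x-z\ra\le0$ this gives $\delta\ge1$, contradicting $\delta<1$. The contradiction shows the search returns after finitely many iterations.

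The main obstacle I anticipate is the correct handling of the quantifier in the linesearch test: the non-termination hypothesis asserts the strict inequality for \emph{all} $u_\alpha\in T(x^k)$, and the whole argument hinges on feeding the particular inner-semicontinuous selection $u^k$ into that universally quantified statement \emph{before} passing to the limit. Everything else reduces to a routine limit and the one-line projection estimate above.
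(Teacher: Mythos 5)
Your proposal is correct and follows essentially the same route as the paper: assume non-termination, use inner-semicontinuity to select $v_\alpha\in T(\alpha z+(1-\alpha)x)$ converging to $u$, pass to the limit in the universally quantified strict inequality to get $(1-\delta)\la u,x-z\ra\le 0$, and then use Fact \ref{proj}\ref{proj-ii} together with Proposition \ref{parada} to reach a contradiction with $x\notin S_*$. The only cosmetic difference is that the paper deduces $x=z$ first and then contradicts $x\notin S_*$, whereas you invoke $x\neq z$ first to make the inner product strictly positive; the two orderings are logically equivalent.
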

\proof
Since $T$ is {\it ISC} at $x$, given $u\in T(x)$ and $y_{\alpha}\rightarrow x$, with $y_\alpha= \alpha z+(1-\alpha) x$ and $\alpha\in (0,1)$ there exist $v_{\alpha}\in T(y_{\alpha}): v_{\alpha}\rightarrow u$ when $\alpha \to 0$. Now, suppose that {\bf Linesearch \ref{feasible}} never stops, then we have:
\begin{equation}\label{delta}
\la v_{\alpha}, x-z\ra < \delta \la u, x-z\ra.
\end{equation}
Taking limits in \eqref{delta} when $\alpha \to 0$
$$\la u, x-z\ra\leq \delta\la u,x-z\ra \Leftrightarrow (1-\delta) \la u,x-z\ra \leq 0.$$
 Since $\delta \in (0,1)$,

 $$0\geq \la u, x-z\ra=\frac{1}{\beta}\big(\|x-z\|^2+\la z-(x-\beta u),x-z \ra \big),$$

\noindent using Fact \ref{proj} \ref{proj-ii} we get $\|x-z\|^2\leq \la(x-\beta u)-z,x-z \ra\leq 0$, which implies that $x=z$. Hence, $x\in S_*$ by Proposition \ref{parada}. This contradicts our assumption $x\notin S_*$. Thus, the well definition of {\bf Linesearch \ref{feasible}} follows.
\endproof

\begin{remark}\label{admis}
The implementation of the {\bf Linesearch \ref{feasible}} for point-to-set mappings might be a nontrivial task.  In Example \ref{ptso} we present an operator $T$, for which this implementation is possible.
\end{remark}

Recall from Section \ref{preli} that
\begin{equation}\label{hk}
H(z,v):=\big\{ y\in \RR^n : \la v,y-z\ra\le 0\big \}
\end{equation} and
\begin{equation}\label{wk}
W(x):=\big\{ y\in \RR^n : \la y-x,x^0-x\ra\le 0\big \}.
\end{equation}

These halfspaces (as well as their intersections) have been widely used in the literature, e.g., \cite{yu-re-hu, bui, ye, sva , yun-reinier-1}.

Now we describe the Algorithm.

\begin{center}\fbox{\begin{minipage}[b]{\textwidth}
\begin{Calg}{F}(Feasible direction algorithm)\label{A1}
Given $(\beta_k)_{k\in \NN}\subset[\check{\beta},\hat{\beta}]$ such that $0<\check{\beta}\le \hat{\beta}<+\infty$ and $\dd\in(0,1)$.
\item[ ]{\bf Initialization:} Take $x^0\in C$, define $\tilde{H}_0:=\RR^n$ and set $k\leftarrow 0$.

\item[ ]{\bf Step~1: } Set $z^k=P_C(x^k-\beta_k u^k)$ with $u^k\in T(x^k)$ and
\begin{equation}\label{vbar}
\alpha_k= {\bf Linesearch\; \ref{feasible}}\;(x^k, \beta_k,\dd),
\end{equation}
i.e., $(\alpha_k,z^k)$ satisfy
\begin{equation}\label{zk212*}
\left\{
\begin{aligned}
&\la \bar{u}^k,x^k-z^k\ra\geq\delta\la u^k,x^k-z^k\ra.
\end{aligned}\right.
\end{equation} with $\overline{u}^k\in T(\alpha_k z^k+(1-\alpha_k)x^k)$.
\item[ ]{\bf Step~2 (Stopping Criterion):} If  $z^k=x^k$  or $z^k=P_C(z^k-v^k)$ with $v^k\in T(z^k)$, then stop. Otherwise,
\item[ ]{\bf Step 3:} Set
\begin{subequations}
\begin{align}
\ox^k&:=\alpha_k z^k+(1-\alpha_k)x^k,{\label{xbar2}}\\
\tilde{H}_k&:=\tilde{H}_{k-1}\cap H(\overline{x}^k,\overline{u}^k),\label{htilde} \\
\text{and}\quad
x^{k+1}&:=P_{C \cap \tilde{H}_k\cap W(x^k)}(x^0);\label{P112}
\end{align}
\end{subequations}
\item[ ]{\bf Step~4:} If $x^{k+1}=x^k$, then stop. Otherwise, set $k\leftarrow k+1$ and go to {\bf Step~1}.
\end{Calg}\end{minipage}}\end{center}

\section{Convergence Analysis}\label{conver}
Our goal in this section  is to establish the convergence of the algorithm. First of all, let us see that the stopping criterion is well defined.
\begin{proposition}\label{stop}
If the {\bf Algorithm \ref{A1}} stops at {\bf Step 2}, then $x^k$  or $z^k$ are  solutions.
\end{proposition}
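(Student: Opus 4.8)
The plan is to read off the two alternatives in the stopping test of {\bf Step~2} and to show that each of them fits exactly the hypothesis of Proposition \ref{parada}, so that the desired membership in $S_*$ follows immediately. There is nothing to construct here: the entire content is that both stopping tests exhibit a fixed point of a projected residual map, and Proposition \ref{parada} already characterizes such fixed points as solutions of Problem \eqref{prob}.

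First I would recall that, by {\bf Step~1}, the point $z^k$ is defined as $z^k=P_C(x^k-\beta_k u^k)$ with $u^k\in T(x^k)$ and $\beta_k\ge\check{\beta}>0$. Suppose the algorithm stops because $z^k=x^k$. Substituting this identity into the defining relation of {\bf Step~1} gives $x^k=P_C(x^k-\beta_k u^k)$. Since $u^k\in T(x^k)$ and $\beta_k>0$, Proposition \ref{parada} (applied with $x:=x^k$, $u:=u^k$ and $\beta:=\beta_k$) yields $x^k\in S_*$.

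Next I would treat the second alternative, in which the algorithm stops because $z^k=P_C(z^k-v^k)$ for some $v^k\in T(z^k)$. Here I would apply Proposition \ref{parada} directly, now with $x:=z^k$, $u:=v^k$ and $\beta:=1>0$; the hypothesis $z^k=P_C(z^k-\beta v^k)$ holds by assumption, and therefore $z^k\in S_*$. Combining the two cases shows that whenever {\bf Algorithm \ref{A1}} stops at {\bf Step~2}, either $x^k$ or $z^k$ solves Problem \eqref{prob}.

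I do not expect any genuine obstacle in this argument. The only point requiring care is the bookkeeping of which point plays the role of $x$ and which scalar plays the role of $\beta$ in each invocation of Proposition \ref{parada}: in the first case the relevant stepsize is $\beta_k$, whereas in the second case it is the fixed value $1$ coming from the form $P_C(z^k-v^k)$ used in the stopping criterion.
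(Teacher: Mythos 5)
Your proposal is correct and is exactly the paper's argument: the paper's proof is the one-line observation that both stopping tests are instances of the fixed-point characterization in Proposition \ref{parada}, which you have simply spelled out case by case with the appropriate choices of $x$, $u$ and $\beta$. Nothing further is needed.
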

\proof
This is a direct consequence of the definition of $z^k$, $v^k$ and Proposition \ref{parada}.
\endproof
\begin{proposition}\label{H-separa-x12} Let $(\ox^k)_{k\in \NN}$, $(x^k)_{k\in\NN}$ and $(\overline{u}^k)_{k\in \NN}$ be sequences generated by {\bf Algorithm \ref{A1}}. If $x^k \in H(\ox^k,\overline{u}^k)$ (see \eqref{hk}), then $x^k\in S_*$.
\end{proposition}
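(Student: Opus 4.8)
The plan is to recognize that Proposition~\ref{H-separa-x12} is an almost immediate application of the separation Lemma~\ref{separa}, so the real work is checking that the three hypotheses of that lemma are satisfied by the algorithm's iterates. Concretely, I would set $x := x^k$, $z := z^k$, $\ox := \ox^k$, $u := u^k$, and $\overline{u} := \overline{u}^k$, and verify that Lemma~\ref{separa} applies to these choices.

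First I would confirm hypothesis (i): by the definition of $z^k$ in {\bf Step~1} we have $z^k = P_C(x^k - \beta_k u^k)$ with $\beta_k > 0$ and $u^k \in T(x^k)$, and by \eqref{xbar2} we have $\ox^k = \alpha_k z^k + (1-\alpha_k) x^k$ with $\alpha_k \in (0,1)$ (the linesearch returns such an $\alpha_k$ by Lemma~\ref{feasible-well}). This matches the form required by Lemma~\ref{separa}, modulo the evident typo in its statement (i), where $\ox = \alpha z + (1-\alpha)$ should read $\ox = \alpha z + (1-\alpha)x$. Second, hypothesis (ii) asks that $(\ox^k, \overline{u}^k)$ and $(x^k, u^k)$ both lie in $\gr(T)$; this is immediate since $u^k \in T(x^k)$ by construction and $\overline{u}^k \in T(\ox^k) = T(\alpha_k z^k + (1-\alpha_k) x^k)$ as stated right after \eqref{zk212*}. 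Third, hypothesis (iii), namely $\la \overline{u}^k, x^k - z^k \ra \geq \delta \la u^k, x^k - z^k \ra$, is exactly the inequality \eqref{zk212*} that the linesearch guarantees upon termination.

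With all three hypotheses verified, Lemma~\ref{separa} directly yields the conclusion: since $x^k \in H(\ox^k, \overline{u}^k)$ by the hypothesis of the present proposition, the lemma gives $x^k \in S_*$, which is exactly what we want to prove.

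I do not expect any genuine obstacle here; the proposition is essentially a re-indexed specialization of Lemma~\ref{separa} to the algorithmic setting. The only point demanding a little care is the bookkeeping that each object produced by {\bf Algorithm~\ref{A1}} indeed fulfills the corresponding abstract hypothesis of Lemma~\ref{separa}---in particular that $\alpha_k \in (0,1)$ holds (rather than $\alpha_k = 1$), which relies on the linesearch having genuinely reduced $\alpha$ at least once. If the linesearch could return $\alpha_k = 1$, then $\ox^k = z^k$ and one should check that the argument of Lemma~\ref{separa} still forces $x^k = z^k$; tracing the chain \eqref{deja ver} shows the conclusion $x = z$ survives that boundary case as well, so the result holds regardless.
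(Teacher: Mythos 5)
Your proof is correct and follows exactly the paper's own route: the paper likewise disposes of this proposition in one line by invoking Lemma~\ref{separa} with $\alpha=\alpha_k$, $x=x^k$, $\ox=\ox^k$, $u=u^k$, $\overline{u}=\overline{u}^k$ and the linesearch inequality. Your additional bookkeeping (verifying hypotheses (i)--(iii) explicitly, flagging the typo in Lemma~\ref{separa}(i), and checking the $\alpha_k=1$ boundary case) is sound and only makes the argument more careful than the original.
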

\proof
Follows by applying Lemma \ref{separa} for $\alpha=\alpha_k$, $x=x^k$, $\ox=\ox^k$, $u=u^k$ and $\overline{u}=\overline{u}^k$, using {\bf Linesearch \ref{feasible}}.
\endproof

\medskip
As a direct consequence of  {\bf Linesearch \ref{feasible}}, we state the following remark, pointing out a useful algebraic property of the sequence generated by {\bf Algorithm~\ref{A1}}.

\begin{remark}\label{useful}
 Let $(x^k)_{k\in \NN}$ and $(\alpha_k)_{k\in \NN}$
be sequences generated by {\bf Algorithm \ref{A1}}, using \eqref{deja ver}, we get
\begin{equation}\label{d:useful}
\forall k\in\NN:\quad\la \overline{u}^k,x^{k}-\ox^{k} \ra  \geq
\frac{\alpha_k}{\hat{\beta}}\delta\|x^k-z^k\|^2.
\end{equation}
\end{remark}

\begin{proposition}\label{stopp}
If $x^{k+1}=x^k$, then $x^k\in S_*$.
\end{proposition}
\proof
If  $x^{k+1}=P_{C\cap \tilde{H}_k\cap W(x^k)}(x^0)=x^k$, then  $x^k\in \tilde{H}_k$, which implies that $ x^k\in H(\ox^k,\overline{u}^k)$ ,  so that by Proposition \ref{H-separa-x12} we have that $x^k\in S_*$.
\endproof

If {\bf Algorithm \ref{A1}} stops in a finite number of iterations, then by Propositions \ref{stop} and \ref{stopp} the last iterate is a solution. Hence,
it is enough to establish convergence when the algorithm does not stop. Therefore, from now on, we suppose that the sequence $(x^k)_{k\in\NN}$ generated by the {\bf Algorithm F}, is infinite and  $x^k\notin S_*$ for all $k\in\NN$. The next result shows that the projection step is well-defined.
\begin{proposition}\label{pertenece}
Let $\tilde{H}_k$ be as in \eqref{htilde}, and define $\tilde{S}_*:=\cap_{k\in \NN}\tilde{H}_k\cap S_*$. Then, $\tilde{S}_*\subset H(\ox^k,\overline{u}^k)\cap W(x^k)$ for all $k \in \NN$ and $\tilde{S}_*  \neq \emptyset$.
\end{proposition}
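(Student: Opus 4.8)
The plan is to prove the two assertions separately, starting with nonemptiness, which also guarantees that the set projected onto in \eqref{P112} is nonempty (so that the projection, and hence the algorithm, is well defined). For $\tilde{S}_* \neq \emptyset$ I would establish the stronger inclusion $S_0 \subseteq \tilde{S}_*$ and then invoke \ref{a2}. First one checks that $\ox^j \in C$ for every $j$: by \eqref{P112} and $x^0 \in C$ an easy induction gives $x^j \in C$, while $z^j = P_C(x^j - \beta_j u^j) \in C$, so convexity of $C$ together with \eqref{xbar2} and $\alpha_j \in (0,1)$ yields $\ox^j \in C$. Since $\overline{u}^j \in T(\ox^j)$, Lemma \ref{propseq} (with base point $z = \ox^j$) gives $S_0 \subseteq H(\ox^j, \overline{u}^j)$ for all $j$; unfolding \eqref{htilde} shows $\cap_{k\in\NN}\tilde{H}_k = \cap_j H(\ox^j, \overline{u}^j)$, whence $S_0 \subseteq \cap_{k\in\NN}\tilde{H}_k$. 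Combining this with $S_0 \subseteq S_*$ from Proposition \ref{soin} (applicable because the continuity hypothesis \ref{a1} includes ISC) gives $S_0 \subseteq \tilde{S}_*$, and \ref{a2} then forces $\tilde{S}_* \neq \emptyset$.

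For the inclusion $\tilde{S}_* \subseteq H(\ox^k, \overline{u}^k) \cap W(x^k)$, the $H$-part is immediate: \eqref{htilde} gives $\tilde{H}_k \subseteq H(\ox^k, \overline{u}^k)$, and $\tilde{S}_* \subseteq \tilde{H}_k$ by definition of $\tilde{S}_*$. The $W$-part I would prove by induction on $k$. The base case is trivial, since $W(x^0) = \RR^n$. For the inductive step, assume $\tilde{S}_* \subseteq W(x^k)$ and set $D_k := C \cap \tilde{H}_k \cap W(x^k)$, a closed and convex set. One checks $\tilde{S}_* \subseteq D_k$ from $\tilde{S}_* \subseteq S_* \subseteq C$, from $\tilde{S}_* \subseteq \tilde{H}_k$, and from the inductive hypothesis $\tilde{S}_* \subseteq W(x^k)$. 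Since $x^{k+1} = P_{D_k}(x^0)$ by \eqref{P112}, Fact \ref{proj}\ref{proj-ii} gives $\la x^0 - x^{k+1}, z - x^{k+1} \ra \le 0$ for every $z \in D_k$, that is $z \in W(x^{k+1})$; specializing to $z \in \tilde{S}_* \subseteq D_k$ closes the induction.

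The main obstacle is the bookkeeping in this last induction: one must match $\tilde{S}_*$ against the exact set $D_k$ onto which $x^0$ is projected, which relies on already having $\tilde{S}_* \subseteq W(x^k)$ at the previous level and on $S_* \subseteq C$. Everywhere else the argument is pure convex geometry supplemented by the concrete witness $S_0$; the one hypothesis-sensitive point worth double-checking is that the base point $\ox^j$ of each halfspace coincides with the point at which $\overline{u}^j$ is evaluated, so that Lemma \ref{propseq} applies verbatim.
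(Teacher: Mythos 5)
Your proof is correct and follows essentially the same route as the paper's: the inclusion in $H(\ox^k,\overline{u}^k)$ from the definition of $\tilde{H}_k$, the inclusion in $W(x^k)$ by induction via Fact \ref{proj}\ref{proj-ii} applied to the projection defining $x^{k+1}$, and nonemptiness via $S_0\subseteq\cap_k\tilde{H}_k$ (Lemma \ref{propseq}) together with $S_0\subseteq S_*$ (Proposition \ref{soin}) and Assumption \ref{a2}. The only difference is that you are somewhat more explicit than the paper about checking $\ox^j\in C$ so that Lemma \ref{propseq} applies, and about identifying the exact set $D_k$ onto which $x^0$ is projected in the inductive step.
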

\proof
By definition, we have that  $\tilde{S}_*\subset C\cap H(\ox^k,\overline{x}^k)$ for all $k\in \NN$. By induction we prove that $\tilde{S}_*\subset W(x^k)$ for all
 $k\in \NN$. For $k=0$ we have that $\tilde{S}_*\subset W(x^0)=\RR^n$, suppose that $\tilde{S}_* \subset W(x^k)$, then by the Fact \ref{proj} \ref{proj-ii},  we obtain
$\langle x_{*}-x^{k+1}\,,\, x^0-x^{k+1}\rangle\leq 0$, for all $x_{*}\in \tilde{S}_*$, which implies $x_{*}\in W(x^{k+1})$. Then, the result follow by induction. By Lemma
 \ref{propseq} we have that $S_0\subset H(\ox^k,\overline{u}^k)$ for all $k\in\NN$. By Assumption \ref{a1} and Proposition \ref{soin}, we deduce that  $S_0\subseteq S_*$,  hence
 $S_0 \subseteq \tilde{S}_*$ and by Assumption \ref{a2}, $\tilde{S}_* \neq \emptyset$.
\endproof

Now we prove the well definition of the iterates of {\bf Algorithm \ref{feasible}}.
\begin{proposition}
The sequence $(x^k)_{k\in\NN}$ is well defined and $(x^k)_{k\in\NN}\subset C$.
\end{proposition}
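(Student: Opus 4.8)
The plan is to argue by induction on $k$ that at each iteration the set onto which we project in \eqref{P112} is nonempty, closed and convex, so that the orthogonal projection defining $x^{k+1}$ exists and is unique, and moreover that this projection lands in $C$. The base case is immediate: $x^0\in C$ by the {\bf Initialization} step.

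For the inductive step, suppose $x^k\in C$ is well defined; recall that by our standing hypothesis $x^k\notin S_*$. First I would check that {\bf Step~1} can be carried out: since $x^k\in C\subset\dom(T)$ by \ref{a0}, the set $T(x^k)$ is nonempty and we may select $u^k\in T(x^k)$, so that $z^k=P_C(x^k-\beta_k u^k)$ is well defined ($C$ being nonempty, closed and convex). Because $T$ is {\it ISC} on $C$ by \ref{a1} and $x^k\notin S_*$, Lemma \ref{feasible-well} guarantees that {\bf Linesearch \ref{feasible}} terminates after finitely many steps, producing $\alpha_k$, and hence $\ox^k$ and $\overline{u}^k\in T(\ox^k)$ via \eqref{xbar2}. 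The set $\tilde{H}_k$ is by construction an intersection of finitely many halfspaces of the form \eqref{hk}, hence closed and convex, and $W(x^k)$ in \eqref{wk} is a halfspace; consequently $C\cap\tilde{H}_k\cap W(x^k)$ is closed and convex.

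The remaining and crucial point is the nonemptiness of $C\cap\tilde{H}_k\cap W(x^k)$, and this is exactly what Proposition \ref{pertenece} delivers. Indeed, $\tilde{S}_*\neq\emptyset$ by that proposition, and since $\tilde{S}_*\subseteq S_*\subseteq C$, $\tilde{S}_*\subseteq\tilde{H}_k$ (as $\tilde{S}_*=\cap_{j\in\NN}\tilde{H}_j\cap S_*\subseteq\tilde{H}_k$), and $\tilde{S}_*\subseteq W(x^k)$ (again by Proposition \ref{pertenece}), we obtain $\emptyset\neq\tilde{S}_*\subseteq C\cap\tilde{H}_k\cap W(x^k)$. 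Therefore the projection in \eqref{P112} is well defined, and $x^{k+1}=P_{C\cap\tilde{H}_k\cap W(x^k)}(x^0)\in C\cap\tilde{H}_k\cap W(x^k)\subseteq C$, which closes the induction. The only part requiring genuine care is this nonemptiness of the feasible set; everything else (closedness, convexity, existence of $u^k$, termination of the linesearch) is routine once \ref{a0}, \ref{a1} and Lemma \ref{feasible-well} are in hand. In particular, the argument would collapse without \ref{a2}, which feeds $S_0\neq\emptyset$ into $\tilde{S}_*\neq\emptyset$ through Proposition \ref{pertenece}, so I would flag that assumption as the essential ingredient.
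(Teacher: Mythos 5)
Your proof is correct and follows essentially the same route as the paper's: nonemptiness of the set $C\cap\tilde{H}_k\cap W(x^k)$ via Proposition \ref{pertenece} (fed by \ref{a2}), closedness and convexity of the intersection, and $x^{k+1}\in C$ directly from the definition of the projection in \eqref{P112}. Your additional checks (induction framing, existence of $u^k$ via \ref{a0}, termination of the linesearch via Lemma \ref{feasible-well}, and the explicit inclusion $\tilde{S}_*\subseteq\tilde{H}_k$ rather than just $H(\ox^k,\overline{u}^k)$) are just a more careful write-up of the same argument.
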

\proof
 By definition of the solution set, we have that $S_*\subset C$, then by Proposition \ref{pertenece},  for all $k\in\NN$ the closed and convex set $C\cap H(\ox^k,\overline{u}^k)\cap W(x^k)\neq \emptyset$, (note that $C$, $H(\ox^k,\overline{u}^k)$ and $W(x^k)$ are convex and closed sets). Therefore, the projection step  is well-defined. The fact that $x^k \in C$ for all $k\in\NN$ follows from the definition of the iterates in \eqref{P112} and the fact that $x^0\in C$.
\endproof

The next result proves the boundedness of the sequence generated by the algorithm.
\begin{proposition}\label{bounded}
The sequence generated by the algorithm satisfies that  $(x^k)_{k\in \NN}\subset B[\frac{1}{2}(x^0+\bar{x}),\frac{\rho}{2}]$, where $\bar{x}:=P_{S_0}(x^0)$ and $\rho=\|x^0-P_{S_0}(x^0)\|$. Therefore, the sequence $(x^k)_{k\in\NN}$ is bounded.
\end{proposition}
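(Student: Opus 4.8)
The plan is to recognize that this is essentially a direct application of Lemma \ref{l:lim-2} with the fixed convex set $S = S_0$ and the moving point $x = x^k$, yielding membership in one ball that does not depend on $k$. The strategy is therefore to verify the three hypotheses of that lemma for $S_0$ and then read off the conclusion uniformly in $k$.

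First I would check the hypotheses of Lemma \ref{l:lim-2} for $S=S_0$. The set $S_0$ is nonempty by Assumption \ref{a2}, and is closed and convex (as observed in the Introduction). Next I would confirm that $x^0 \notin S_0$: by the standing assumption of this section we have $x^k \notin S_*$ for every $k$, in particular $x^0 \notin S_*$; since Assumption \ref{a1} gives continuity (hence \emph{ISC}) and Proposition \ref{soin} then yields $S_0 \subseteq S_*$, it follows that $x^0 \notin S_0$. Consequently $\bar{x} = P_{S_0}(x^0)$ and $\rho = {\rm dist}(x^0, S_0) = \|x^0 - P_{S_0}(x^0)\|$ are well defined, with $\rho > 0$.

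The remaining (and genuinely substantive) hypothesis is the inclusion $S_0 \subseteq W(x^k)$ for every $k$. This is already packaged inside Proposition \ref{pertenece}: its induction shows $\tilde{S}_* \subseteq W(x^k)$ for all $k$, while the same proposition establishes $S_0 \subseteq \tilde{S}_*$ (using Lemma \ref{propseq} together with $S_0 \subseteq S_*$). Chaining these gives $S_0 \subseteq W(x^k)$ for all $k$, as required. With all three hypotheses in hand, applying Lemma \ref{l:lim-2} for each fixed $k$ (taking $x = x^k$) produces $x^k \in B[\tfrac{1}{2}(x^0 + \bar{x}), \tfrac{1}{2}\rho]$. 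Because the center $\tfrac{1}{2}(x^0+\bar{x})$ and radius $\tfrac{\rho}{2}$ depend only on $x^0$ and $S_0$ and not on $k$, the whole sequence lies in this single closed ball, which is bounded; hence $(x^k)_{k\in\NN}$ is bounded.

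The main point to be careful about is precisely that Lemma \ref{l:lim-2} is a per-point statement, so the real content is the \emph{uniform} inclusion $S_0 \subseteq W(x^k)$, which is what keeps the enclosing ball fixed across all iterates. Once that uniform inclusion is invoked from Proposition \ref{pertenece}, there is no analytical obstacle: the boundedness is immediate from containment in one fixed ball.
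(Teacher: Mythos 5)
Your proposal is correct and follows essentially the same route as the paper: both apply Lemma \ref{l:lim-2} with $S=S_0$ and $x=x^k$ and observe that the resulting ball is independent of $k$. You are in fact more careful than the paper's own two-line proof, since you explicitly verify the hypothesis $S_0\subseteq W(x^k)$ by chaining $S_0\subseteq\tilde{S}_*\subseteq W(x^k)$ from Proposition \ref{pertenece}, a step the paper leaves implicit.
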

\proof
Since $S_0$ is a nonempty, convex and closed set  and $x^0 \notin S_0$, we are in the hypothesis of  Lemma \ref{l:lim-2}. Using this lemma with $S=S_0$ and $x=x^k$, the result follows.
\endproof

Next we show that the distance between consecutive iterates tends to zero.
\begin{proposition}\label{near}
The  sequence $(x^k)_{k\in \NN}$ satisfies that $\sum_{k=0}^{\infty} \|x^{k+1}-x^k\|^2<\infty$, hence $\lim_{k\to\infty}\|x^{k+1}-x^k\|=0$.
\end{proposition}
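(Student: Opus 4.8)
The plan is to exploit the defining relation $x^{k+1}=P_{C\cap\tilde H_k\cap W(x^k)}(x^0)$, which forces $x^{k+1}\in W(x^k)$, together with the identity $x^k=P_{W(x^k)}(x^0)$ supplied by Proposition \ref{wx}. The central observation is that these two facts make the scalar sequence of distances $\|x^k-x^0\|$ nondecreasing; once this is in hand, a telescoping argument combined with the boundedness from Proposition \ref{bounded} yields summability of the increments.

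First I would record the key inequality. Since $x^{k+1}$ lies in the intersection defining its update, in particular $x^{k+1}\in W(x^k)=\{y:\la y-x^k,x^0-x^k\ra\le 0\}$, so the very definition of $W(x^k)$ gives $\la x^{k+1}-x^k,x^0-x^k\ra\le 0$, equivalently $\la x^{k+1}-x^k,x^k-x^0\ra\ge 0$. (The same inequality follows by applying Fact \ref{proj}\ref{proj-ii} to the projection $x^k=P_{W(x^k)}(x^0)$ from Proposition \ref{wx} at the point $x^{k+1}\in W(x^k)$.)

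Next I would expand the squared distance via the Pythagorean identity,
\begin{equation*}
\|x^{k+1}-x^0\|^2=\|x^{k+1}-x^k\|^2+2\la x^{k+1}-x^k,x^k-x^0\ra+\|x^k-x^0\|^2.
\end{equation*}
Discarding the nonnegative cross term produces
\begin{equation*}
\|x^{k+1}-x^k\|^2\le \|x^{k+1}-x^0\|^2-\|x^k-x^0\|^2,
\end{equation*}
which in particular exhibits $d_k:=\|x^k-x^0\|^2$ as a nondecreasing scalar sequence. Summing this estimate over $k=0,\dots,N$ telescopes to $\sum_{k=0}^N\|x^{k+1}-x^k\|^2\le \|x^{N+1}-x^0\|^2-\|x^0-x^0\|^2=\|x^{N+1}-x^0\|^2$.

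Finally I would invoke Proposition \ref{bounded}: the iterates lie in a fixed ball, so $\|x^{N+1}-x^0\|$ is bounded uniformly in $N$. Hence the partial sums of the nonnegative series are bounded, forcing $\sum_{k=0}^\infty\|x^{k+1}-x^k\|^2<\infty$, and the conclusion $\lim_{k\to\infty}\|x^{k+1}-x^k\|=0$ is immediate from the vanishing of the tail of a convergent series. I do not anticipate a genuine obstacle here; the only point requiring care is the sign bookkeeping in the Pythagorean expansion, where one must confirm that the membership $x^{k+1}\in W(x^k)$ renders the cross term nonnegative (so that the increment is controlled by the \emph{increase} of the distance to $x^0$), rather than nonpositive.
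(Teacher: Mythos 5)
Your proof is correct and follows essentially the same route as the paper: both establish the key inequality $\|x^{k+1}-x^k\|^2\le\|x^{k+1}-x^0\|^2-\|x^k-x^0\|^2$ from $x^{k+1}\in W(x^k)$ together with $x^k=P_{W(x^k)}(x^0)$, then telescope and invoke boundedness. The only (cosmetic) difference is that you derive this inequality by a direct Pythagorean expansion using the definition of $W(x^k)$, whereas the paper obtains it by applying Fact \ref{proj}\ref{proj-i} to the projection onto $W(x^k)$.
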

\proof
By Proposition \ref{wx}, for $x=x^k$, we have that $x^k=P_{W(x^k)}(x^0)$. Since $x^{k+1} \in W(x^k)$ then, by the Fact \ref{proj} \ref{proj-i}, we obtain that $0\leq \|x^{k+1}-x^k\|^2\leq \|x^{k+1}-x^0\|^2-\|x^k-x^0\|^2$. Summing this inequality from $k=0$ to $\infty$ and using the boundedness of the sequence $(x^k)_{k\in\NN}$, we obtain that $\sum_{k=0}^{\infty} \|x^{k+1}-x^k\|^2<\infty$. Therefore, $\lim_{k\to\infty}\|x^{k+1}-x^k\|=0$.
\endproof

The next result on the sequences generated by the {\bf Algorithm F} will be necessary for the convergence analysis. 
\begin{corollary}\label{bounded}
The sequences $(\overline{x}^k)_{k\in\NN}$, $(u^k)_{k\in\NN}$, $(\overline{u}^k)_{k\in\NN}$ and $(z^k)_{k\in\NN}$ generated by the {\bf Algorithm F} are bounded.
\end{corollary}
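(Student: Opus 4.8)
The plan is to obtain boundedness of the four auxiliary sequences by propagating the already-established boundedness of $(x^k)_\knn$ through the defining relations of \textbf{Algorithm \ref{A1}}, invoking Assumption \ref{a3} (local boundedness of $T$ on $C$) at the two places where the operator $T$ is actually evaluated. Concretely, the chain of implications I would run is
$(x^k)_\knn \text{ bounded} \Rightarrow (u^k)_\knn \text{ bounded} \Rightarrow (z^k)_\knn \text{ bounded} \Rightarrow (\ox^k)_\knn \text{ bounded} \Rightarrow (\overline{u}^k)_\knn \text{ bounded}$,
where each arrow uses one structural fact about the iteration together with a previously recorded property (local boundedness via Remark \ref{localbound}, Lipschitz continuity of $P_C$ via Remark \ref{procon}, or convexity of $C$).

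First I would recall that $(x^k)_\knn$ is bounded (shown above via Lemma \ref{l:lim-2}) and that $(x^k)_\knn\subset C$. Since $C\subset\dom(T)$ by Assumption \ref{a0} and $u^k\in T(x^k)$, I would apply Remark \ref{localbound}: in finite dimensions the local boundedness of $T$ on $C$ (Assumption \ref{a3}) is equivalent to $T$ mapping bounded subsets of $C$ into bounded sets. As $(x^k)_\knn$ is a bounded subset of $C$, the set $\bigcup_k T(x^k)$ is bounded, so in particular $(u^k)_\knn$ is bounded. Next, for $(z^k)_\knn$ I would use $z^k=P_C(x^k-\beta_k u^k)$: the arguments $x^k-\beta_k u^k$ form a bounded sequence because $(x^k)_\knn$ and $(u^k)_\knn$ are bounded and $\beta_k\in[\check{\beta},\hat{\beta}]$ is uniformly bounded; since $P_C$ is Lipschitz continuous it maps bounded sets into bounded sets (Remark \ref{procon}), whence $(z^k)_\knn$ is bounded.

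It then remains to handle $(\ox^k)_\knn$ and $(\overline{u}^k)_\knn$. From \eqref{xbar2}, $\ox^k=\alpha_k z^k+(1-\alpha_k)x^k$ with $\alpha_k\in(0,1)$ is a convex combination of the bounded sequences $(z^k)_\knn$ and $(x^k)_\knn$, so $\|\ox^k\|\le\max\{\|z^k\|,\|x^k\|\}$ shows $(\ox^k)_\knn$ is bounded; moreover $\ox^k\in C$ because $z^k=P_C(\cdot)\in C$, $x^k\in C$, and $C$ is convex. Finally, since $\overline{u}^k\in T(\ox^k)$ and $(\ox^k)_\knn$ is a bounded subset of $C$, a second application of Remark \ref{localbound} (Assumption \ref{a3}) yields boundedness of $(\overline{u}^k)_\knn$, completing the chain.

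The argument is essentially bookkeeping, so I do not anticipate a genuine obstacle. The only two points requiring care are: (a) verifying that every point at which $T$ is evaluated actually lies in $C$, so that Assumption \ref{a3} applies—this holds for $x^k$ since $(x^k)_\knn\subset C$ and for $\ox^k$ by the convexity argument just given; and (b) noting that it is precisely the uniform upper bound $\hat{\beta}$ on $(\beta_k)_\knn$ that keeps $(x^k-\beta_k u^k)_\knn$ bounded before projecting. Both are immediate, so the main content is really the correct invocation of local boundedness in the point-to-set setting.
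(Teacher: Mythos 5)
Your argument is correct and is exactly the chain the paper intends: the paper's own proof is a one-line citation of Assumption (A3), the algorithm's defining relations, Remark \ref{localbound}, and Remark \ref{procon}, and your write-up simply makes that chain explicit (boundedness of $(x^k)$ $\Rightarrow$ $(u^k)$ via local boundedness $\Rightarrow$ $(z^k)$ via Lipschitz continuity of $P_C$ and $\beta_k\le\hat{\beta}$ $\Rightarrow$ $(\ox^k)$ by convex combination $\Rightarrow$ $(\overline{u}^k)$ via local boundedness again). No gaps; your care in checking that $\ox^k\in C$ before the second application of (A3) is a detail the paper leaves implicit.
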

\proof
The boundedness of all sequences follows from assumption \ref{a3}, {\bf Algorithm F}, Remark \ref{localbound} and Remark \ref{procon}. 
\endproof

Now we present a key convergence result for our algorithm.
\begin{theorem}\label{acumul1}
Let $(x^k)_{k\in \NN}$ be the sequence generated by the {\bf Algorithm F}. Then  $Cl(x^k)_{k\in \NN}\subseteq \tilde{S}_* \subseteq S_*$.
\end{theorem}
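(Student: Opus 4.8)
The inclusion $\tilde{S}_*\subseteq S_*$ is immediate from the definition $\tilde{S}_*=\big(\bigcap_{k\in\NN}\tilde{H}_k\big)\cap S_*$ recorded in Proposition \ref{pertenece}, so the whole content of the statement is that every cluster point of $(x^k)_{k\in\NN}$ lies in $\tilde{S}_*$. Let $\bar{x}$ be a cluster point and fix a subsequence $x^{k_j}\to\bar{x}$. The plan is to split the membership $\bar{x}\in\tilde{S}_*$ into the two conditions $\bar{x}\in\bigcap_{k}\tilde{H}_k$ and $\bar{x}\in S_*$. The first is the easy half: by \eqref{htilde} the sets $\tilde{H}_k$ are nested and decreasing, and $x^{k+1}=P_{C\cap\tilde{H}_k\cap W(x^k)}(x^0)\in\tilde{H}_k$, so for every fixed $m$ the tail $\{x^{j}:j>m\}$ sits inside the closed set $\tilde{H}_m$; passing to the limit along the subsequence gives $\bar{x}\in\tilde{H}_m$ for all $m$. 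Thus the real work is to prove $\bar{x}\in S_*$.

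For $\bar{x}\in S_*$ I would aim to produce, in the limit, the projection identity of Proposition \ref{parada}. Using the boundedness of the auxiliary sequences (Corollary \ref{bounded}) and of $(\beta_k)\subset[\check{\beta},\hat{\beta}]$, I would refine the subsequence so that in addition $u^{k_j}\to\bar{u}$, $\beta_{k_j}\to\beta^*\in[\check{\beta},\hat{\beta}]$ and $\alpha_{k_j}\to\alpha^*\in[0,1]$. Since $u^{k_j}\in T(x^{k_j})$ and $T$ is OSC (closed graph, by \ref{a1}), the limit obeys $\bar{u}\in T(\bar{x})$; as $\beta^*>0$, the Lipschitz continuity of $P_C$ (Remark \ref{procon}) gives $z^{k_j}\to\bar{z}:=P_C(\bar{x}-\beta^*\bar{u})$. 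The target is then $\bar{x}=\bar{z}$, whereupon Proposition \ref{parada} delivers $\bar{x}\in S_*$.

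The bridge to $\bar{x}=\bar{z}$ is the estimate $\alpha_k\|x^k-z^k\|^2\to0$. I would obtain it by combining Remark \ref{useful}, i.e. $\la\overline{u}^k,x^k-\ox^k\ra\ge\frac{\alpha_k}{\hat{\beta}}\delta\|x^k-z^k\|^2$, with the membership $x^{k+1}\in\tilde{H}_k\subseteq H(\ox^k,\overline{u}^k)$, which says $\la\overline{u}^k,x^{k+1}-\ox^k\ra\le0$; subtracting yields $\frac{\alpha_k}{\hat{\beta}}\delta\|x^k-z^k\|^2\le\la\overline{u}^k,x^k-x^{k+1}\ra\le\|\overline{u}^k\|\,\|x^k-x^{k+1}\|$, and the right side tends to $0$ by Corollary \ref{bounded} and Proposition \ref{near}. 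Now I split on $\alpha^*$. If $\alpha^*>0$, then $\alpha_{k_j}\|x^{k_j}-z^{k_j}\|^2\to0$ forces $\|\bar{x}-\bar{z}\|=0$. If $\alpha^*=0$ the estimate is vacuous, and I would instead mine the stopping rule of {\bf Linesearch \ref{feasible}}: for large $j$ one has $\alpha_{k_j}<1$, so at least one shrink occurred and at the previous trial $\gamma_{k_j}:=\alpha_{k_j}/\theta$ the ``If'' branch held, i.e. $\la w,x^{k_j}-z^{k_j}\ra<\delta\la u^{k_j},x^{k_j}-z^{k_j}\ra$ for every $w\in T(y^{k_j})$ with $y^{k_j}:=\gamma_{k_j}z^{k_j}+(1-\gamma_{k_j})x^{k_j}$. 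Since $\gamma_{k_j}\to0$ and $(z^{k_j})$ is bounded, $y^{k_j}\to\bar{x}$ with $y^{k_j}\in C\subseteq\dom(T)$; invoking ISC of $T$ at $\bar{x}$ for $\bar{u}\in T(\bar{x})$ yields $w^{k_j}\in T(y^{k_j})$ with $w^{k_j}\to\bar{u}$, and feeding this particular $w^{k_j}$ into the strict inequality and passing to the limit gives $\la\bar{u},\bar{x}-\bar{z}\ra\le\delta\la\bar{u},\bar{x}-\bar{z}\ra$, hence $\la\bar{u},\bar{x}-\bar{z}\ra\le0$. Using the identity $\la\bar{u},\bar{x}-\bar{z}\ra=\tfrac{1}{\beta^*}\big(\|\bar{x}-\bar{z}\|^2+\la\bar{z}-(\bar{x}-\beta^*\bar{u}),\bar{x}-\bar{z}\ra\big)$ together with Fact \ref{proj}\ref{proj-ii} for $\bar{z}=P_C(\bar{x}-\beta^*\bar{u})$ (evaluated at the feasible point $\bar{x}\in C$) to make the second term nonnegative, I conclude $\|\bar{x}-\bar{z}\|^2\le0$, so again $\bar{x}=\bar{z}$.

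I expect the case $\alpha^*=0$ to be the main obstacle: there the quantitative bound $\alpha_k\|x^k-z^k\|^2\to0$ degenerates, and one is forced to extract information from the last unsuccessful linesearch trial. The argument there hinges on pairing the ISC property (to build admissible $w^{k_j}\in T(y^{k_j})$ converging to $\bar{u}$) with the strict linesearch inequality, exactly mirroring the finite-termination argument of Lemma \ref{feasible-well}. Once $\bar{x}=\bar{z}$ is secured in both cases, Proposition \ref{parada} gives $\bar{x}\in S_*$, and combining with $\bar{x}\in\bigcap_k\tilde{H}_k$ yields $\bar{x}\in\tilde{S}_*$.
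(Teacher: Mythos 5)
Your proposal is correct and follows essentially the same route as the paper's proof: the bound $\frac{\alpha_k\delta}{\hat\beta}\|x^k-z^k\|^2\le\la\overline{u}^k,x^k-x^{k+1}\ra$ obtained from $x^{k+1}\in H(\ox^k,\overline{u}^k)$ and Remark \ref{useful}, the subsequence extraction, the case split on $\alpha^*>0$ versus $\alpha^*=0$, and in the degenerate case the use of ISC on the last rejected trial step $\alpha_{k_j}/\theta$ are all exactly the paper's argument, with the nestedness of the $\tilde{H}_k$ handling the $\bigcap_k\tilde{H}_k$ membership at the other end. Your explicit remark that $\alpha_{k_j}<1$ for large $j$ (so that a previous rejected trial actually exists) is a small point the paper leaves implicit.
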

\proof
First we prove that  $Cl(x^k)_{k\in \NN}\subseteq S_*$. Since $x^{k+1}\in H(\overline{x}^k, \overline{u}^k)$ for all $k\in \NN$ then by definition of $H(\ox,\overline{u})$ we obtain
 $\la \overline{u}^k,x^{k+1}-\overline{x}^k\ra \leq 0$.
Now,
$$0\geq \la \overline{u}^k,x^{k+1}-\overline{x}^k\ra=\la \overline{u}^k,x^{k+1}-x^k\ra+\alpha_k\la \overline{u}^k,x^{k}-z^k\ra.$$
 Using the same ideas as  in \eqref{deja ver} and Remark \ref{useful}, we have that
$$\frac{\alpha_k \delta}{\hat{\beta}}\|x^k-z^k\|^2 \leq \la
 \overline{u}^k,x^k-x^{k+1}\ra\leq \|\overline{u}^k\|\|x^k-x^{k+1}\|,$$
by Corollary \ref{bounded} the sequences $(\ox^k)_{k\in\NN}$ and $(\bar{u}^k)_{k\in\NN}$ are bounded. Passing to the limits for  $k\rightarrow \infty$ we obtain,
\begin{equation}\label{lim-0}
\lim_{k\to \infty} \alpha_{k}\|x^{k}-z^{k}\|=0.
\end{equation}
We take a subsequence $(i_k)_{k\in \NN}$, such that $(\alpha_{i_k})_{k\in\NN}$, $(\beta_{i_k})_{k\in\NN}$,  $(x^{i_k})_{k\in\NN}$, $(u^{i_k})_{k\in\NN}$
 and $(z^{i_k})_{k\in\NN}$ being convergent to $\tilde{\alpha}$, $\tilde{\beta}$, $\tilde{x}$, $\tilde{u}$ and $\tilde{z}$ respectively. This is possible by the boundedness
of all the sequences involved. Note that by Assumption \ref{a1}, we have that $Gr(T)$ is closed, and therefore $\tilde{u}\in T(\tilde{x})$. This leaves two cases:

{\bf Case~1:} $\disp\lim_{k\to \infty}\alpha_{i_k}=\tilde{\alpha}>0$. As consequence of \eqref{lim-0}, $
\lim_{k\to \infty}\|x^{i_k}-z^{i_k}\|=0$.
Using the continuity of the projection  $\disp \tilde{x}=\lim_{k\to
\infty}
x^{i_k}=\lim_{k\to
\infty}
z^{i_k}=P_C\big(\tilde{x}-\tilde{\beta}\tilde{u}\big)$.
Then, $\tilde{x}=P_C\big(\tilde{x}-\tilde{\beta}\tilde{u}\big)$,
and Proposition \ref{parada} implies that $\tilde{x}\in S_*$.

{\bf Case~2:} $\disp\lim_{k\to \infty}\alpha_{i_k}=\tilde{\alpha}=0$. Define
$\tilde{\alpha}_k=\frac{\alpha_{k}}{\theta}$. Then,
\begin{equation}\label{tildealphato0}\lim_{k\to\infty}\tilde{\alpha}_{i_k}=0.\end{equation} Define
$\tilde{y}^{k}:=\tilde{\alpha}_k{z}^{k}+(1-\tilde{\alpha}_k)x^{k}$. Hence,
\begin{equation}\label{ykgox}
\lim_{k\to\infty}\|x^{i_k}-\tilde{y}^{i_k}\|=0,
\end{equation}
which imply that the sequences $(x^{i_k})_{k\in\NN}$ and $(\tilde{y}^{i_k})_{k\in\NN}$ have the same cluster points.  From the definition of $\alpha_k$ in {\bf Algorithm~\ref{feasible}}, $\tilde{y}^{k}$ does not satisfy the inequality \eqref{zk212*}, that is, for all $v^k \in T(\tilde{y}^k)$ we have
\begin{equation}\label{conse}
\la {v}^{k} , x^{k} -
{z}^{k}\ra<\delta \la u^{k},
x^{k}-{z}^{k}\ra.
\end{equation}

As $\tilde{y}^{i_k}\to \tilde {x}$ we have by the continuity of $T$, that exists a sequence $v^{i_k}\in T(\tilde{y}^{i_k})$ that converges to $\tilde{u} \in T(\tilde{x})$. Taking this sequence and limits over the subsequence $(i_k)_{k\in\NN}$ in \eqref{conse} we have that  $\la \tilde{u}, \tilde{x}-\tilde{z}\ra \le \delta \la \tilde{u} , \tilde{x}-\tilde{z}\ra$. Then,
\begin{align*}\nonumber
0 & \, \ge (1-\delta) \big\la \tilde{u}, \tilde{x}-\tilde{z}\big\ra
=\frac{(1-\delta)}{\tilde{\beta}} \big\la
\tilde{x}-(\tilde{x}-\tilde{\beta} \tilde{u}), \tilde{x}
-\tilde{z}\ra \ge \frac{(1-\delta)}{\tilde{\beta}}\|\tilde{x}-\tilde{z}\|^2 \ge
0.\nonumber
\end{align*}
This means that  $\tilde{x}=\tilde{z}$, the continuity of the projection and Proposition \ref{parada} implies $\tilde{x}\in S_*$.

 We have proved that all cluster points belong to $S_*$. Now suppose that the sequence $(x^{n_k})_{k\in\NN}$ converges to $\bar{x} \notin H(\ox^{l_0}, \overline{u}^{l_0})$
for some $l_0 \in \NN$. As $H(\ox^{l_0},\overline{u}^{l_0})$ is closed, and for all $n_k >l_0$ using the definitions \eqref{P112}
 and \eqref{htilde}, we get that $x^{n_k}\in H(\ox^{l_0},\overline{u}^{l_0})$, which contradicts the fact that $\bar{x} \notin H(\ox^{l_0},\overline{u}^{l_0})$. This establish the result.

\endproof

\begin{theorem}
The sequence generated by the algorithm  converges to a point in the solution set $S_*$.
\end{theorem}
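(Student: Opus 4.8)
The plan is to upgrade the cluster-point statement of Theorem \ref{acumul1} into convergence of the \emph{whole} sequence, by identifying the unique possible cluster point as the projection of $x^0$ onto $\overline{\operatorname{co}}(\tilde{S}_*)$. Throughout I keep the standing assumptions that the sequence is infinite and $x^k\notin S_*$ for all $k$. First I would collect the facts already available: the sequence $(x^k)_{k\in\NN}$ is bounded (established above via Lemma \ref{l:lim-2}), every cluster point lies in $\tilde{S}_*\subseteq S_*$ (Theorem \ref{acumul1}), $\tilde{S}_*\neq\emptyset$ and $\tilde{S}_*\subseteq W(x^k)$ for every $k$ (Proposition \ref{pertenece}), and $x^k=P_{W(x^k)}(x^0)$ (Proposition \ref{wx}). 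Since a bounded sequence converges as soon as it has a single cluster point, it suffices to prove that all cluster points coincide.

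To do so, set $\bar{x}:=P_{\overline{\operatorname{co}}(\tilde{S}_*)}(x^0)$ and $\rho:=\|x^0-\bar{x}\|=\operatorname{dist}\big(x^0,\overline{\operatorname{co}}(\tilde{S}_*)\big)$, which are well defined because $\overline{\operatorname{co}}(\tilde{S}_*)$ is nonempty, closed and convex. The key observation is that, as $W(x^k)$ is closed and convex and contains $\tilde{S}_*$, it also contains $\overline{\operatorname{co}}(\tilde{S}_*)$; in particular $\bar{x}\in W(x^k)$. Combining this with $x^k=P_{W(x^k)}(x^0)$ gives
\[
\|x^k-x^0\|\le\|\bar{x}-x^0\|=\rho\qquad\text{for all }k\in\NN .
\]
On the other hand, since $x^{k+1}\in W(x^k)$ and $x^k=P_{W(x^k)}(x^0)$, the same computation as in the proof of Proposition \ref{near} (via Fact \ref{proj}\ref{proj-i}) yields $\|x^{k+1}-x^0\|^2\ge\|x^k-x^0\|^2$, so the scalar sequence $\big(\|x^k-x^0\|\big)_{k\in\NN}$ is nondecreasing and bounded above by $\rho$; let $\ell\le\rho$ denote its limit.

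Now I would close the argument by a sandwich on the distance to $x^0$. Let $\bar{y}$ be any cluster point, say $\bar{y}=\lim_{k}x^{i_k}$. Passing to the limit in $\|x^{i_k}-x^0\|\to\ell$ gives $\|\bar{y}-x^0\|=\ell$. By Theorem \ref{acumul1}, $\bar{y}\in\tilde{S}_*\subseteq\overline{\operatorname{co}}(\tilde{S}_*)$, hence $\|\bar{y}-x^0\|\ge\rho$; together with $\ell\le\rho$ this forces $\ell=\rho$ and $\|\bar{y}-x^0\|=\rho$. Thus $\bar{y}$ realizes the distance from $x^0$ to the closed convex set $\overline{\operatorname{co}}(\tilde{S}_*)$, and uniqueness of the metric projection gives $\bar{y}=\bar{x}$. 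Since $\bar{y}$ was an arbitrary cluster point, $\bar{x}$ is the only cluster point, so $x^k\to\bar{x}$; and $\bar{x}\in Cl(x^k)_{k\in\NN}\subseteq\tilde{S}_*\subseteq S_*$ by Theorem \ref{acumul1}, which is exactly the claim.

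I expect the main obstacle to be the lack of convexity of $S_*$ (and of $\tilde{S}_*$), which blocks a direct projection/Fej\'er argument; the device of passing to $\overline{\operatorname{co}}(\tilde{S}_*)$ and running the nondecreasing-versus-minimal distance sandwich is what circumvents it. One degenerate case also has to be dismissed: if $x^0\in\overline{\operatorname{co}}(\tilde{S}_*)$ then $\rho=0$ forces $x^k\equiv x^0\in\tilde{S}_*\subseteq S_*$, contradicting the standing assumption $x^k\notin S_*$; hence $\rho>0$ and $\bar{x}$ is a genuine boundary point, as used above.
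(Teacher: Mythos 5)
Your proof is correct and follows essentially the same route as the paper: both pass to the closed convex hull $\overline{\operatorname{co}}(\tilde{S}_*)$, use its containment in every $W(x^k)$ together with Theorem \ref{acumul1}, and identify the unique cluster point as $P_{\overline{\operatorname{co}}(\tilde{S}_*)}(x^0)$. The only (cosmetic) difference is in the last step: the paper invokes Lemma \ref{l:lim-2} to trap the sequence in the ball $B[\tfrac{1}{2}(x^0+\bar{x}),\tfrac{\rho}{2}]$, whose intersection with $\overline{\operatorname{co}}(\tilde{S}_*)$ is $\{\bar{x}\}$, whereas you run a nondecreasing-distance sandwich against $\rho$; your explicit treatment of the degenerate case $x^0\in\overline{\operatorname{co}}(\tilde{S}_*)$ is a small point the paper glosses over.
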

\proof
By Proposition \ref{pertenece}, the closure of the convex hull of $\tilde{S}_*$ ($\bar{co}(\tilde{S}_*)$), is contained in $W(x^k)$ for all $k\in\NN$ since  $W(x^k)$ is convex and closed. Since $\bar{co}(\tilde{S}_*)$ is a nonempty, convex and closed set  and $x^0 \notin \bar{co}(\tilde{S}_*) $, we may  apply Lemma \ref{l:lim-2} with $S=\bar{co}(\tilde{S}_*)$ and $x=x^k$. Hence, we have that $(x^k)_{k \in \NN} \subset B[\frac{x^0+\bar{x}}{2},\frac{\rho}{2}]$, where $\bar{x}=P_{\bar{co}(\tilde{S}_*)}(x^0)$ and $\rho=\|x^0-\bar{x}\|$. By Theorem \ref{acumul1}, all cluster points of the sequence belong to $\bar{co}(\tilde{S}_*)$. On the other hand, by the definition of $\overline{x}$ and $\rho$ we have $B[\frac{x^0+\bar{x}}{2},\frac{\rho}{2}]\cap \bar{co}(\tilde{S}_*)=\{\bar{x}\}$. This implies that $Cl(x^k)_{k\in \NN}=\{\ox\}$, therefore the sequence has only one cluster point and hence converges to the cluster point $\ox$. By Theorem \ref{acumul1}, we conclude that $\ox \in \tilde{S}_*\subseteq S_*$.
\endproof

\section{Numerical experiments}\label{numer}
In this section we show some numerical experiments to test  {\bf Algorithm \ref{A1}} and compare it with \cite[Algorithm 2.1]{ye}. We use MATLAB version R2015b on a PC  with Intel(R)
 Core(TM) i5-4570 CPU 3.20GHz and Windows 7 Enterprise, Service Pack 1. For the calculation of the projection step we use the Quadratic Programming (quadprog) tool.
In Examples 5.1 and 5.2 we use the stopping criterion $\|x^k-z^k\|^2\leq 10^{-8}$, with $x^k$ and $z^k$ generated by the algorithm, $\delta =0.01$,  $\beta_k=1$
for all $k\in\NN$, $\theta=0.5$. For ``$x^0$" we denote the initial point, {\it `` iter"} denotes the number of iteration of the algorithm, {\it ``nT"} denotes the number of
evaluations of the operator $T$. In Example 5.1  and \ref{ptso} {\it ``sol"} denotes the point at which the algorithm stops.  In Example 5.3  we use
 $\theta=0.25$ and tolerance $\|x^k-z^k\|^2\leq 10^{-4}$. In Example \ref{ptso} we use $\theta=\delta=0.5$ and $\beta_k=1$ for all $k\in\NN$,
 the tolerance used was  $\|x^k-z^k\|^2\leq 10^{-80}$. \\

\begin{example}\cite{ye,had}\label{haj}
Let $C=[0,1]\times[0,1]$ and $t=(x_1 +\sqrt{x_1^2+4x_2})/2$. We consider Problem \eqref{prob} with the operator $T:C\rightarrow \RR^2$ defined as:
$$
T(x_1,x_2)=(-t/(1+t),-1/(1+t)).
$$
\end{example}
This example was introduced by Hadjisavvas and Schaible in \cite{had} and was used in \cite{ye}.
 The operator $T$ in this example is quasimonotone (i.e., for all $(x,u),(y,v)\in Gr(T)$  we have that  $\la u,y-x\ra>0$ implies $\la v,y-x\ra>0$).
The solution set is $S_*=S_0=(1,1)$. The results are listed in {\bf Table 1}.\\

\begin{tabular}{ |p{2cm}| p{1.5cm} p{1.8cm} p{1.5cm} |p{1.5cm} p{1.8cm} p{1.5cm}|  }
 \hline
 \multicolumn{7}{|c|}{{\bf Table 1.} Results for example \ref{haj}.} \\
 \hline
 & {\bf Alg {\bf F}}  & &   &{\bf Alg 2.1} &in \cite{ye}&  \\
 \hline
 $x^0$   & iter(nT)   & CPU time  & sol & iter(nT) & CPU time & sol \\
 \hline
 (0,1)& 1(3) &0.249602& (1,1) & 3(2) &0.312002& (1,1)\\
 (0,0)& 1(3)& 0.234001 & (1,1)& 50(406)& 0.561604& (1,1)\\
 (1,0)& 2(4) & 0.265202& (1,1)& 71(331) & 0.561604& (1,0.999)  \\
 (0.5,0.5)& 0(2) & 0.0156001& (1,1) &1(2) & 0.234001& (1,1) \\
 (0.2,0.7)& 1(3) &0.249602 & (1,1) & 2(3) & 0.280802 & (1,1) \\
 (0.1,0.7)& 1(3) & 0.249602&(1,1) & 2(3)& 0.296402 & (1,1) \\
 \hline
\end{tabular}
\vspace{0.7cm}

The following example with $n=1$,  $\rho(x)=\rho_1(x)=\|x\|^2$,  and $a=1$, is \cite[Example 4.2]{ye}.
\begin{example}\label{myexa}
Let $C=[-a,a]^n$ with $a>0$, an consider $T:C\rightarrow \RR^n$ defined as $T(x)=(\rho_1(x), \rho_2(x), \cdots , \rho_n(x))$ where, for all $i=1,\cdots, n$,  $\rho_i :\RR^n \rightarrow \RR_+$ is a continuous function satisfying $\rho_i(x)=0$ , iff, $x=0$. Notice that $S_0=-a(1,1, \cdots, 1)$ and $S_*=S_0 \cup (0,0,\cdots,0)$. In this case, $S_0\neq S_*$ and since $T$ is continuous we may apply {\bf Algorithm \ref{A1}} to find the solution. See the results for $\rho(x)=\rho_i(x)=\|x\|^2$ and $\rho(x)=\rho_i(x)=\|x\|$ for all $i=1,\cdots, n$, and $a=1$, in {\bf Table 2}. In the first two rows of {\bf Table 2}, we note that the algorithm stopped at a point close to $(0,0,\cdots,0)$,  because the stopping criterion $\|x^k-z^k\|^2\leq 10^{-8}$ was satisfied. A similar (rather inaccurate) convergence result is observed for Algorithm 2.1 in \cite{ye}.
\end{example}

\begin{tabular}{|p{0.8cm} p{0.4cm} p{2.4cm}| p{1.3cm} p{1.5cm} p{1.5cm}|p{1.5cm} p{1.5cm} p{1.5cm}|}
 \hline
 \multicolumn{9}{|c|}{{\bf Table 2} Results for Example \ref{myexa}.}\\
 \hline
  & & & {\bf Alg {\bf F}}  & & & {\bf Alg 2.1}& in \cite{ye} & \\
 \hline
 $\rho$ &$n$ & $x^0$   & iter(nT)   & CPU time & sol &  iter(nT) & CPU time & sol  \\
 \hline
 $\|\cdot\|^2$& 1 & 0.1& 88(178) &0.608404& 0.0099 & 512(2797) &1.79401 & 0.0099\\
 $\|\cdot\|^2$& 1 &0.5& 94(190)& 0.592804 & 0.0099& 962(6428)& 3.05762& 0.01\\
 $\|\cdot\|^2$& 1 & -0.5& 2(8) & 0.34375 & -1& 2(7) & 0.296402 & -1 \\
 $\|\cdot\|$&5 &$10^{-3}(1,...,1)$& 7(23) & 0.4375&-(1,...,1)&7(22) & 0.46875 &-(1,...,1)\\
 $\|\cdot\|$&50 &$-0.1(1,...,1)$& 2(8) &0.375&-(1,...,1)& 2(7) & 0.2968 &-(1,...,1)\\
 $\|\cdot\|$&100 &$-0.1^{3}(1,...,1)$& 3(11) & 0.3906& -(1,...,1)& 3(10)& 0.4843 &-(1,...,1)\\
 \hline
\end{tabular}
\vspace{0.7cm}

The following example is \cite[Example 4.3]{ye}.
\begin{example}\label{linear}
Consider the feasible set $C=\{x\in \RR^5: x_i\geq 0, \ \ i=1,2,\cdots,5, \ \ \sum_{i=1}^5 x_i=a\}$ where $a>0$. The problem,
\begin{eqnarray}\label{mini}
&\min F(x) \\
&s.t.\ \  x\in C,
\end{eqnarray}
where $$F(x)=\frac{\tfrac{1}{2} \la Hx,x\ra+\la q,x\ra+1}{\sum_{i=1}^{5} x_i}$$ with $H$ being a positive diagonal matrix, with the same random element $0.1\leq h \leq 1.6$ in
 the diagonal and $q=(-1,-1,\cdots, -1)$. Note that $F$ is a smooth quasiconvex function then attains its
minimum value on a compact set $C$. This problem may be modelled as Problem \ref{prob}. $T$ is a point-to-point operator defined by $T=\nabla F$. Note that $T(x)=\Big(\frac{\partial F(x)}{\partial x_1}, \cdots, \frac{\partial F(x)}{\partial x_5}\Big)$,
with $$\frac{\partial F(x)}{\partial x_i}=\frac{hx_i\sum_{i=1}^{5} x_i-\tfrac{1}{2}h\sum_{i=1}^{5} x_i^2-1}{(\sum_{i=1}^{5} x_i)^2}.$$ For this example, we have a quasimonotone variational inequality with $S_0=\{\tfrac{1}{5}(a, \cdots,a)\}$. Some values for $\delta$ are tested for better comparison with  \cite[Algorithm 2.1]{ye}. See the results in {\bf Table 3}.
\end{example}
\begin{tabular}{|p{2cm} p{1cm} p{1cm}| p{2cm} p{2cm} |p{2cm} p{2cm}|}
 \hline
 \multicolumn{7}{|c|}{{\bf Table 3} Results for Example \ref{linear}.}\\
 \hline
  & & & {\bf Alg {\bf F}}  & & {\bf Alg 2.1} in & \cite{ye} \\
 \hline
 $x^0$ &$\delta$ & $a$   & iter(nT)   & CPU time  &  iter(nT) & CPU time  \\
 \hline
 $(0,0,5,0,0)$& 0.01& 5& 22(46) &0.218401& 567(2269) &7.22285\\
 $(0,2,0,2,1)$& 0.01 &5& 36(74)& 0.312002& 509(2546)& 6.81724\\
 $(0,0,5,0,0)$& 0.5 & 5& 14(30) & 0.156001& 22(45) & 0.249602  \\
 $(0,2,0,2,1)$&0.5 &$5$& 42(86) & 0.374402 & 21(43) & 0.218401 \\
 $(1,1,1,1,6)$&0.01&$10$& 94(190) &0.639604& 439(1757) & 5.25723 \\
 $(1,1,6,1,1)$&0.01&$10$& 101(204) & 0.686404& 482(1929)& 6.00604\\
 $(1,1,1,1,6)$&0.99&$10$& 712(2138) & 4.32123 & 40(81) & 0.358802 \\
 $(1,1,6,1,1)$&0.99&$10$& 846(2540) & 5.22603& 61(123)& 0.514803\\
 \hline
\end{tabular}
\\
\\

The following is an example with $T$  be point-to-set and continuous. This example is inspired by \cite [Example 2.5.8]{bur-iu}.
\begin{example}\label{ptso}
Let the point-to-set operator $T:\RR^2\rightrightarrows \RR^2$, be defined by $$T(x,\theta):=\{t(\cos(\theta),\sin(\theta)): t\geq x\},$$ and the
set $C=\{(x,\theta): x\geq 0, \  \theta\in[0,\pi/2]\}$. Consider Problem \eqref{prob} for $T$ and $C$.

It may be shown that the operator $T$ is continuous, but not {\it USC}. Since $(0,0)\in T(0,\theta)$ for all $\theta\in [0,\pi/2]$ we have the solution set
$S_*=\{(0,\theta): \theta \in [0,\pi/2]\}$. It may also be shown easily that $S_0=\{(0,0)\}$. In this example, we perform {\bf Step 1} as follows. Given $x^k=(t_k,\theta_k)$, take
$u^k=t_k(\cos(\theta_k),\sin(\theta_k))$. Our numerical results are reported in {\bf Table 4} below.
\end{example}

\begin{tabular}{|p{2cm} | p{2cm} p{2cm} p{3.5cm}|}
 \hline
 \multicolumn{4}{|c|}{{\bf Table 4} Results for Example \ref{ptso}.}\\
 \hline
  &  & {\bf Alg {\bf F}}  & \\
 \hline
 $x^0$      & iter(nT)   & CPU time  &  sol   \\
 \hline
$(1,\pi/2)$  & 7(16) &0.296402& $(0,0)$ \\
$(0.5,\pi/3)$ & 145(292)& 1.09201& $(0,0)$\\
$(0.1,\pi/2)$ &378(758)& 2.77682& $(0,0)$\\
$(100,\pi/2)$ & 6(15) & 0.327602& $(0,0)$  \\
$(0.1,\pi/10)$  & 89(180) & 0.702005 & $(0,0)$  \\
 $(1,\pi/100)$ &  7(16) &0.312002& $(0,0)$ \\
$(20,\pi/6)$ & 3(8) &0.280802& $(0,1.7526*10^{-16})$ \\
$(10,\pi/4)$ & 3(8) &0.312002& $(0,8.4431*10^{-12})$ \\
$(1500,\pi/8)$ & 5(12) &0.296402& $(0,4.3692*10^{-9})$ \\
 \hline
\end{tabular}
\\
\\

\begin{remark}
The numerical results indicate that the performance of our algorithm is comparable to the one in \cite{ye}. In Example \ref{haj} and
 Example \ref{myexa} we observe a slight advantage of our algorithm for some choices of the initial point. In Example \ref{linear} we note that some choices of $\delta$ give
us a different behavior. Namely, when $\delta$ is close to $0$, our algorithm requires a fewer number of iterations and less CPU time. This situation is reversed when $\delta$ is close
 to $1$. Indeed, for this case \cite[Algorithm 2.1]{ye} requires fewer iterations and less CPU time than ours. This is confirmed by the fact that for $\delta=0.5$, both algorithms have
  similar performance. This difference of behaviour for different values of  $\delta$ is due to the different linesearch used in the  algorithms. In Example \ref{ptso} the implementation is possible since for all $x\in \dom(T)$, the set $T(x)$ is a ray. Therefore the computational
 implementation of the {\bf Linesearch \ref{feasible}} is possible because the optimization problem
\begin{equation*}\label{optim} \mbox{max }  \la y,w\ra \mbox{ such that } y\in T(x), \end{equation*}
is implementable.
\end{remark}

\section{Conclusions}\label{conclu}
We have presented an algorithm for solving the Variational Inequality Problem in finite dimensional Euclidian spaces for point-to-set operators.
 We established convergence without any monotonicity assumption. Our numerical experiments showed that when the operator $T$ is point-to-point,
 our algorithm has a competitive performance when compared with similar algorithms in the literature. The {\bf Linesearch \ref{feasible}} requires the knowledge of the whole
 set $T(\alpha z+(1-\alpha)x)$. Indeed, it requires to verify that $\forall u_{\alpha}\in T(\alpha z+(1-\alpha)x)$, the inequality $\la u_{\alpha}, x-z\ra <\delta \la u,x-z\ra$ holds.
 The question of finding an implementable linesearch for the point-to-set case is an
 open problem and the subject of our future research.

\section*{Acknowledgments}
R. D\'iaz Mill\'an was partially supported by CNPq grant 200427/2015-6. This
work was concluded while the second author was visiting the  School of Information Technology and Mathematical Sciences at the  University of South
Australia. R. D\'iaz Mill\'an would like to thank the great hospitality received during his visit, particularly to Regina S. Burachik and  C. Yal\c{c}in Kaya. R. D\'iaz Mill\'an would like to extend its gratitude to Prof. Ole Peter Smith for his valuable suggestions.

\bibliographystyle{plain}


\end{document}